\documentclass[11pt]{amsart}
\usepackage{palatino} 
\usepackage{amssymb,amsmath,url,graphicx}
\usepackage[all]{xy}
\usepackage{enumitem}
\usepackage{graphicx}
\usepackage[hidelinks]{hyperref}
\usepackage{fullpage}
\usepackage{color}

%%%%%%%%%%%%%%%%%%%%%%%%%%%%%%%%%%%%%%%%%%%%%%%%%%%%%%%%%%%%%%%%%%%%%
%   Work Mode / Clean Mode                                         %
%%%%%%%%%%%%%%%%%%%%%%%%%%%%%%%%%%%%%%%%%%%%%%%%%%%%%%%%%%%%%%%%%%%%%
%                                                                   %
\newif\ifdebug                                                      %
%                                                                   %
%\debugtrue
\debugfalse
%                                                                   %
%%%%%%%%%%%%%%%%%%%%%%%%%%%%%%%%%%%%%%%%%%%%%%%%%%%%%%%%%%%%%%%%%%%%%

%---------------------------------
% Watermark:
%---------------------------------
%\usepackage{draftwatermark}
%\SetWatermarkText{DRAFT}
%\SetWatermarkScale{1.75}
%\SetWatermarkLightness{.9}

%---------------------------------
% Macros for debug mode
%---------------------------------
\newif\ifinfootnote
\infootnotefalse

\let\footnoteasusual\footnote
\renewcommand{\footnote}[1]
{\infootnotetrue\footnoteasusual{#1}\infootnotefalse}

%
% \mnote , \knote , \ynote  are for comments by Megumi, Kiumars, Yael.
%

%
% \mute{text to be seen}{text to be suppressed} 
%

\newcommand{\printname}[1]
{\ifmmode{ \smash{ \raisebox{5pt}{\text{\tiny{#1}}} } }
 \else   {\ifinfootnote \smash{\raisebox{0pt}{\tiny{#1}}}
             \else { \marginpar{
                     \smash{ \makebox[0pt]{\raisebox{-12pt}{\tiny{#1}}} }
                               } } \fi} \fi}

%---------------------------------------------------------
% Theoremlike environments:
%---------------------------------------------------------

\swapnumbers
\numberwithin{equation}{section}
\newtheorem {Theorem}[equation]         {Theorem}

\newtheorem {Lemma}[equation]           {Lemma}

\newtheorem {Claim*}                    {Claim}

\newtheorem {Corollary} [equation]      {Corollary}
\newtheorem {Proposition}  [equation]   {Proposition}

\theoremstyle{definition}
\newtheorem{Definition}[equation]{Definition}

\theoremstyle{remark}
\newtheorem{Remark}[equation]{Remark}
\newtheorem*{Remark*}{Remark}
\newtheorem{Example}[equation]{Example}

%   symbols for end of remark/example/definition:

%----------------------------------------
% Spacing:
%----------------------------------------
\setlength{\parskip}{0pt}
\setlist{topsep=0pt,itemsep=6pt}

%---------------------------------------------------------
% Symbols with cal/frak/bb..
%---------------------------------------------------------
\def \Z {{\mathbb Z}}

\def \C {{\mathbb C}}

%---------------------------------------------------------
% Names ..
%---------------------------------------------------------

%---------------------------------------------------------
% More symbols
%---------------------------------------------------------

\newcommand{\cs}{\mathcal{S}}
\newcommand{\cw}{\mathcal{W}}

\newcommand{\Hess}{{\mathcal{H}\textup{ess}}}

\newcommand{\Symm}{{\mathfrak{S}}}

% -----------------------------------------------------------------
\title[Title]{Torus fixed point sets of Hessenberg Schubert varieties in regular semisimple Hessenberg varieties}
% -----------------------------------------------------------------

\author{Megumi Harada}
\address{Dept.\ of Mathematics and Statistics, McMaster University, 
1280 Main Street West, 
Hamilton, Ontario L8S 4K1, Canada}
\email{Megumi.Harada@math.mcmaster.ca}

\author{Martha Precup}
\address{Department of Mathematics and Statistics\\ Washington University in St. Louis \\ One Brookings Drive \\ St. Louis, Missouri  63130 \\ U.S.A. }
\email{martha.precup@wustl.edu}

\date{\today}

\keywords{Hessenberg variety, flag variety, Schubert variety, Schubert cells, Bruhat order, acyclic orientations, reachability, subsets of Weyl type}
%\subjclass[2010]{Primary: 14M17, Secondary: 14M15}

% -----------------------------------------------------------------
\begin{document}

\maketitle

\begin{abstract} 
It is well-known that the $T$-fixed points of a Schubert variety in the flag variety $GL_n(\C)/B$ can be characterized purely combinatorially in terms of Bruhat order on the symmetric group $\Symm_n$.  
In a recent preprint, Cho, Hong, and Lee give a combinatorial description of the $T$-fixed points of Hessenberg analogues of Schubert varieties (which we call Hessenberg Schubert varieties) in a regular semisimple Hessenberg variety.  This note gives an interpretation of their result in terms of Bruhat order by making use of a partition of the symmetric group defined using so-called subsets of Weyl type.  The Appendix, written by Michael Zeng, proves a lemma concerning subsets of Weyl type which is required in our arguments.   
\end{abstract}

%=========================
\section{Introduction} 
%=======================

The main result of this note is  a characterization of the $T$-fixed point sets of (opposite) Hessenberg Schubert varieties in terms of Bruhat order.  We achieve this by giving an interpretation in terms of Bruhat order of the notion of \textit{reachability} -- a concept introduced by Cho, Hong, and Lee in~\cite{Cho-Hong-Lee1}. 
Hessenberg varieties has recently garnered great interest in different research communities due to their connections (which have come to light in the past decade) to many areas, not the least of which is the famously unsolved Stanley--Stembridge conjecture in algebraic combinatorics~\cite{ShareshianWachs2016}; see~\cite{AbeHoriguchi} for an overview. 
 In \cite{Cho-Hong-Lee1}, the authors analyze the Bia\l{}ynicki--Birula decomposition of a regular semisimple Hessenberg variety, and they introduce and use the notion of reachability to give an explicit description of the $T$-fixed points of the closure of a given Bia\l{}ynicki-Birula cell. Such a closure is called a {Hessenberg Schubert variety}, since these are analogues of classical Schubert varieties in the flag variety.

 To describe our results, we briefly recall some terminology. Let $w\in \Symm_n$ and $\Omega_w^\circ := B_-wB/B$ denote the corresponding opposite Schubert cell in the type A flag variety $GL_n(\C)/B$.  Here $B$ and $B_-$ denote the Borel subgroups in $GL_n(\C)$ of upper-triangular and lower-triangular matrices, respectively. Given $u\in \Symm_n$, the partial order on $\Symm_n$ defined by 
\begin{eqnarray}\label{eqn.Bruhat}
w\leq u \; \textup{ whenever } \; uB\in \Omega_w:= \overline{\Omega_w^\circ}
\end{eqnarray}
is called \textbf{Bruhat order}.  This order is fundamental in the study of the symmetric group and the geometry of flag varieties and related spaces.
Now let $\mathsf{S} \in \mathfrak{gl}_n(\C)$ be a diagonal matrix with distinct eigenvalues, $h: [n] \to [n]$ a Hessenberg function, and $\Hess(\mathsf{S}, h)$ the corresponding regular semisimple Hessenberg variety in $GL_n(\C)/B$ (for definitions see Section~\ref{sec: background}).  We call the intersection $\Omega_{w,h}^\circ = \Omega_{w}^\circ\cap \Hess(\mathsf{S},h)$ the  (opposite) Hessenberg Schubert cell indexed by $w \in \Symm_n$, and its closure $\Omega_{w,h}:=\overline{\Omega_{w,h}^\circ}$ is the 
\textbf{opposite Hessenberg Schubert variety indexed by $w$}.  
 The torus $T:=B\cap B_-$ of diagonal matrices in $GL_n(\C)$ acts on $\Hess(\mathsf{S}, h)$ by left multiplication.
Cho, Hong, and Lee gave an explicit characterization of the $T$-fixed points in $\Omega_{w,h}$ in terms of reachability \cite{Cho-Hong-Lee1}. 
Our main contribution in this note is a reinterpretation, using subsets of Weyl type and the corresponding partition of $\Symm_n$, of reachability in terms of Bruhat order (Proposition~\ref{prop.reachability}), which then allows us to give a concrete description of the $T$-fixed points in $\Omega_{w,h}$ in terms of Bruhat order (Theorem~\ref{theorem: main} and Corollary~\ref{cor.reformulation}).

As is implied above, our main tool is a partition of $\Symm_n$ into sets $\cw(\cs,h)$ defined by particular subsets $\cs$ in the type A root system, called {subsets of Weyl type (with respect to $h$)}; cf.~Definition~\ref{definition: subset Weyl type} below.  This partition was introduced by Sommers--Tymoczko in~\cite{SomTym06} and subsequently used by the second author in~\cite{Precup2018} to prove that the Betti numbers of regular Hessenberg varieties are palindromic in all Lie types.   Appendix~\ref{sec.appendix} below by Michael Zeng proves that each set $\cw(\cs,h)$ is a weak Bruhat interval.  Let $w_\cs\in \cw(\cs,h)$ denote the maximal element in this interval.  Theorem~\ref{theorem: main} below shows that $\Omega_{w_\cs,h}^T = \Omega_{w_\cs}^T$.  Thus the notion of reachability, which defines a Hessenberg variation of the partial order in~\eqref{eqn.Bruhat}, can be characterized using Bruhat order.

 Example~\ref{key-ex} below shows that $\Omega_{w_\cs, h}$ is not simply a union of the Hessenberg Schubert cells indexed by $u\geq w_\cs$.  In other words, our Theorem~\ref{theorem: main} does not yield a complete description of the Hessenberg Schubert variety $\Omega_{w_\cs, h}$.

We expect that our interpretation of the results of Cho--Hong--Lee in the language of subsets of Weyl type and Bruhat order will yield further insights into the geometry and combinatorics of Hessenberg varieties.  We hope that this interpretation will shed light on the still-unsolved problem of fully characterizing the closures of (opposite) Hessenberg Schubert cells.  
We leave this problem for future work.

\smallskip
\textbf{Acknowledgements:}  
This work was supported in part by the National Security Agency
under Grant No.~H98230-19-1-0119, The Lyda Hill Foundation, The McGovern
Foundation, and Microsoft Research, as part of the Mathematical Sciences Research Institute Summer Research for Women program. 
The first author is supported by a Natural Science and Engineering Research Council Discovery Grant and a Canada Research Chair (Tier 2) from the Government of Canada. The second author is supported in part by NSF DMS-1954001. 
The Appendix is by Michael Zeng, and is part of an undergraduate research project conducted under the second author's supervision in 2020.

%==================
\section{Background}\label{sec: background} 
%==================

\subsection{Hessenberg Varieties and Hessenberg Schubert cells}

Hessenberg varieties in Lie type A are subvarieties of the (full) flag variety $GL_n(\C)/B$ where $B$ is the Borel subgroup of upper triangular matrices in $GL_n(\C)$.   Let $G = GL_n(\C)$ and let $B_-$ denote the Borel subgroup of lower triangular matrices. 
The following two cell decompositions of $G/B$ (both called a Bruhat decomposition of $G/B$) are well-studied: 

\begin{eqnarray}\label{eqn.Bruhat.decomp}
G/B = \bigsqcup_{w\in \Symm_n} X_w^\circ = \bigsqcup_{w\in \Symm_n} \Omega_{w}^\circ
\end{eqnarray}
where $X_w^\circ:= BwB/B$ is the \textbf{Schubert cell} and $\Omega_w^\circ = B_-wB/B$ is the \textbf{opposite Schubert cell}.  The closure $X_w:= \overline{X_w^\circ}$ (respectively $\Omega_w:= \overline{\Omega_w^\circ}$) is called the \textbf{Schubert variety} (respectively \textbf{opposite Schubert variety}) for $w\in \Symm_n$.  It is an important and well-known fact that
\begin{eqnarray} \label{eqn.Schubert.varieties}
X_w = \bigsqcup_{u\leq w} X_u^{\circ} \; \textup{ and } \; \Omega_w = \bigsqcup_{u\geq w} \Omega_u^{\circ}
\end{eqnarray}
where $\leq$ denotes the Bruhat order on $\Symm_n$ defined in~\eqref{eqn.Bruhat}.  

We denote the root system of $\mathfrak{gl}_n(\C)$ by $ \Phi = \{ t_i-t_j \mid 1\leq i \neq j \leq n \}$ with positive roots $\Phi^+ = \{ t_i-t_j\in \Phi \mid i<j \}$, negative roots $\Phi^- =  \{ t_i-t_j\in \Phi \mid i>j \}$, and simple positive roots $\Delta = \{t_i-t_{i+1} \mid 1\leq i \leq n-1\}$.   Given $w\in \Symm_n$ the \textbf{inversion set} of $w$ is 
\[
N(w):= \{ t_i-t_j \in \Phi^+ \mid w(t_i-t_j) \in \Phi^- \} = \Phi^+ \cap w^{-1}(\Phi^-).
\]
We set $\ell(w):= |N(w)| = |\{ i<j \mid w(i)>w(j) \}|$.  It is known that $X_w^\circ \simeq \C^{\ell(w)}$ and $\Omega_w^\circ\simeq \C^{N-\ell(w)}$ where $N=\sum_{i=1}^{n-1}(n-i) = \dim_\C GL_n(\C)/B$.

A Hessenberg variety in $G/B$ is specified by two pieces of data: a \textbf{Hessenberg function}, that is, a nondecreasing function $h:\{1,2,\ldots,n\} \rightarrow \{1,2,\ldots,n\}$ such that $h(i) \geq i$ for all $i$, and a choice of an element $\mathsf{X}$ in $\mathfrak{gl} (n,\C)$. We frequently write a Hessenberg function by listing its values in sequence,
i.e., $h = (h(1), h(2), \ldots, h(n))$. 
The \textbf{Hessenberg variety} associated to the linear operator $\mathsf{X}$ and Hessenberg function $h$ and is defined as
\begin{equation}\label{eq: definition Hess X h}
{\mathcal{H}ess}(\mathsf{X},h) = \{ gB \mid \mathsf{X} g_i \in \mathrm{span}_\C\{ g_1, \ldots, g_{h(i)}  \} \}
\end{equation}
where $g_1, \ldots, g_n$ denote the columns of $g\in GL_n(\C)$.
In this paper, we focus on the case when $\mathsf{X}$ is a regular semisimple operator $\mathsf{S}$ (i.e., diagonalizable with distinct eigenvalues); more specifically, we fix $\mathsf{S}$ to be a diagonal matrix with distinct eigenvalues. We refer to the corresponding Hessenberg variety $\Hess(\mathsf{S}, h)$ as a \textbf{regular semisimple Hessenberg variety}. 
It was established in~\cite{DeMProSha92} that $\Hess(\mathsf{S},h)$ is a smooth, irreducible variety of dimension $N_h = \sum_{i=1}^{n-1} (h(i)-i)$.

For each $w\in \Symm_n$ we consider the \textbf{Hessenberg Schubert cell}, defined as
\[
X_{w,h}^\circ 
= X_w^\circ\cap \Hess(\mathsf{S}, h) = BwB/B \cap \Hess(\mathsf{S},h)
\]
and also the \textbf{opposite Hessenberg Schubert cell}, defined as 
\begin{equation}\label{eq: opp Hess Schubert} 
\Omega_{w,h}^\circ 
= \Omega_w^\circ\cap \Hess(\mathsf{S}, h) = B_{-}wB/B \cap \Hess(\mathsf{S},h).
\end{equation} 
From this, we obtain decompositions of the regular semisimple Hessenberg variety
\begin{eqnarray}\label{eqn.decomposition}
\Hess(\mathsf{S}, h) = \bigsqcup_{w\in \Symm_n} X_{w,h}^\circ = \bigsqcup_{w\in \Symm_n} \Omega_{w,h}^\circ
\end{eqnarray}
where $X_{w,h}^\circ \simeq \C^{\ell_h(w)}$ and $\Omega_{w,h}^\circ \simeq \C^{N_h- \ell_h(w)}$ for
\begin{eqnarray}\label{eqn.Hess.length}
\ell_h(w):= |\{ i<j \mid w(i)>w(j) \textup{ and } j\leq h(i) \}.
\end{eqnarray}
When $h=(n,n,\ldots, n)$ then $\Hess(\mathsf{S},h) = G/B$ and we recover the Bruhat decomposition of $GL_n(\C)/B$ from~\eqref{eqn.Bruhat.decomp}.  We now define the \textbf{Hessenberg Schubert variety} for $w\in \Symm_n$ to be $X_{w,h}:= \overline{X_{w,h}^\circ}$ and the \textbf{opposite Hessenberg Schubert variety} to be  $\Omega_{w,h}:= \overline{\Omega_{w,h}^\circ}$.  
Our main result gives a combinatorial characterization of $\Omega_{w,h}$ for certain permutations $w$. 

\begin{Remark} 
One may define the Hessenberg Schubert cells and opposite Hessenberg Schubert cells in the language of Bia\l{}ynicki--Birula strata, as in \cite{Cho-Hong-Lee1}.  However, our definition is equivalent, and the Morse-theoretic point of view is not necessary for our purposes. 
\end{Remark}

\subsection{Subsets of Weyl type and acyclic orientations}

As mentioned above, one of the contributions of this note is to introduce the theory of subsets of Weyl type into the study of Hessenberg Schubert closure relations. We briefly recall the relevant terminology and results. 

Let $h: [n]\to [n]$ be a Hessenberg function.  Then $h$ determines a subset of $\Phi^+$ defined by
\[
\Phi_h^+ = \{ t_i-t_j \in \Phi \mid i<j \textup{ and } j\leq h(i)  \}
\]
and similarly, we let $\Phi_h^- = \{ t_j -t_i \mid i < j \, \textup{ and }\,  j\leq h(i) \} \subseteq \Phi^-$.
Note that $\ell_h(w) = | N(w)\cap \Phi_h^+ |$ where $\ell_h$ is the Hessenberg length function defined in~\eqref{eqn.Hess.length} above.

\begin{Definition}\label{definition: subset Weyl type} 
Given a subset $\cs \subseteq \Phi_h^+$ we say that $\cs$ is \textbf{$\Phi_h^+$-closed} if for all $\alpha, \beta\in \cs$ such that $\alpha+\beta \in \Phi_h^+$, then $\alpha+\beta\in \cs$ as well.  Given such a subset $\cs \subseteq \Phi_h^+$, we say that $\cs$ is \textbf{a subset of Weyl type (with respect to $h$)} if both $\cs$ and its complement $\Phi_h^+ \setminus \cs$ are $\Phi_h^+$-closed.  Denote the set of all subsets $\cs \subseteq \Phi_h^+$ of Weyl type (with respect to $h$) by $\cw_h$.
\end{Definition}

It is a well known theorem of Kostant~\cite[Prop.~5.10]{Kos61} that $\cs\subseteq \Phi^+$ is a subset of Weyl type if and only if $\cs = N(w)$ for some $w\in \Symm_n$.  Sommers and Tymoczko generalized that result to the setting of subsets of Weyl type in $\Phi_h^+$.  The following summarizes their results from~\cite{SomTym06} in the form most useful for our purposes.

\begin{Theorem}[Sommers--Tymoczko \cite{SomTym06}]\label{thm.Sommers-Tymoczko} Let $h: [n]\to[n]$ be a Hessenberg function and $\cs\in \cw_h$.  
\begin{enumerate}
\item  There exists $w\in \Symm_n$ such that $\cs = N(w)\cap \Phi_h^+$, and $\cs$ is a subset of Weyl type with respect to $h$ if and only if it is of this form.
\item There exists a unique element $z_\cs\in \Symm_n$ satisfying both $\cs = N(z_\cs)\cap \Phi_h^+$ and $z_\cs^{-1}(-\Delta)\cap \Phi^+ \subseteq \Phi_h^+$.
\item $N(z_\cs)\subseteq N(y)$ for any $y\in \Symm_n$ with $\cs \subseteq N(y)$.
\end{enumerate}
\end{Theorem}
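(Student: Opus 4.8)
The plan is to attach to $\cs$ a single combinatorial object — its ``chain closure'' — and to prove that it is exactly the inversion set we want; all three parts then follow from its properties. Given $\cs\subseteq\Phi_h^+$, I would set
\[
\widehat{\cs} := \{\, t_a-t_c\in\Phi^+ \mid \exists\ a=i_0<i_1<\cdots<i_k=c \text{ with } t_{i_m}-t_{i_{m+1}}\in\cs \text{ for } 0\le m<k \,\}.
\]
The motivation is that $\widehat\cs$ records the roots \emph{forced} by $\cs$: if $y\in\Symm_n$ satisfies $\cs\subseteq N(y)$, then telescoping $y(i_0)>y(i_1)>\cdots>y(i_k)$ shows $\widehat\cs\subseteq N(y)$. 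The entire argument hinges on showing $\widehat\cs$ is biconvex, so that Kostant's theorem produces a unique $z_\cs\in\Symm_n$ with $N(z_\cs)=\widehat\cs$; this single $z_\cs$ will serve for all three statements. As a warm-up, the easy direction of (1) is immediate: if $\cs=N(w)\cap\Phi_h^+$ then both $\cs$ and $\Phi_h^+\setminus\cs=(\Phi^+\setminus N(w))\cap\Phi_h^+$ are intersections with $\Phi_h^+$ of the $\Phi^+$-closed sets $N(w)$ and $\Phi^+\setminus N(w)$ (closed by Kostant), hence are $\Phi_h^+$-closed, so $\cs$ is of Weyl type.

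That $\widehat\cs$ is $\Phi^+$-closed is routine — concatenating two chains is a chain. The crux, and what I expect to be the main obstacle, is that the \emph{complement} of $\widehat\cs$ is closed: I must show that if $t_a-t_c\in\widehat\cs$ and $a<b<c$, then $t_a-t_b\in\widehat\cs$ or $t_b-t_c\in\widehat\cs$. Fixing a chain $a=i_0<\cdots<i_k=c$ for $t_a-t_c$, either $b=i_m$ for some $m$ (and the two sub-chains finish the job), or $i_m<b<i_{m+1}$ for a unique $m$. In the latter case the Hessenberg function enters decisively: from $t_{i_m}-t_{i_{m+1}}\in\cs\subseteq\Phi_h^+$ we get $i_{m+1}\le h(i_m)\le h(b)$, so both $t_{i_m}-t_b$ and $t_b-t_{i_{m+1}}$ lie in $\Phi_h^+$; since their sum $t_{i_m}-t_{i_{m+1}}$ lies in $\cs$ and the complement $\Phi_h^+\setminus\cs$ is $\Phi_h^+$-closed (this is precisely the Weyl-type hypothesis), at least one summand lies in $\cs$. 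Grafting that summand onto the relevant sub-chain yields a chain to $b$ or from $b$. This is the one step where the interplay of $h$ and the Weyl-type condition is essential, and I expect it to be the only genuine difficulty.

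With $\widehat\cs$ biconvex, Kostant gives a unique $z_\cs$ with $N(z_\cs)=\widehat\cs$, and I would then verify its defining properties. For $N(z_\cs)\cap\Phi_h^+=\cs$ — which simultaneously establishes the converse direction of (1) and the first condition in (2) — the inclusion $\supseteq$ is clear, while for $\subseteq$, a root $t_a-t_c\in\widehat\cs\cap\Phi_h^+$ with chain $(i_m)$ satisfies $i_m\le c\le h(a)$, so every partial sum $t_a-t_{i_m}$ lies in $\Phi_h^+$, and induction using the $\Phi_h^+$-closedness of $\cs$ collapses the chain to give $t_a-t_c\in\cs$. For the second condition $z_\cs^{-1}(-\Delta)\cap\Phi^+\subseteq\Phi_h^+$: any $\gamma$ with $z_\cs(\gamma)\in-\Delta$ cannot decompose as $\gamma=\gamma_1+\gamma_2$ with $\gamma_1,\gamma_2\in N(z_\cs)$, since a negative simple root is not a sum of two negative roots; but a chain for $\gamma$ of length $\ge 2$ would produce exactly such a decomposition, so $\gamma$ has a length-one chain, i.e.\ $\gamma\in\cs\subseteq\Phi_h^+$.

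Finally, (3) is the telescoping observation already noted: for any $y$ with $\cs\subseteq N(y)$ and any $t_a-t_c\in N(z_\cs)=\widehat\cs$, a chain forces $y(a)>\cdots>y(c)$, so $t_a-t_c\in N(y)$; thus $N(z_\cs)\subseteq N(y)$. Uniqueness in (2) then follows formally. Write $F:=\{y\in\Symm_n:\cs\subseteq N(y)\}$, which is a filter for weak order (if $\cs\subseteq N(u)$ and $N(u)\subseteq N(v)$ then $v\in F$); by (3), $z_\cs$ is the minimum of $F$. If $z'$ also satisfies both conditions of (2), then each weak-order lower cover of $z'$ removes a single removable root $\gamma\in z'^{-1}(-\Delta)\cap\Phi^+\subseteq N(z')\cap\Phi_h^+=\cs$, and hence no lower cover of $z'$ lies in $F$; since $F$ is a filter this makes $z'$ minimal in $F$, and a filter has a unique minimal element equal to its minimum, so $z'=z_\cs$. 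Thus $z_\cs$ is the unique element satisfying (2), and the whole theorem rests on the co-closedness step of the second paragraph.
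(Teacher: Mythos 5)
The paper does not prove this theorem; it is quoted from Sommers--Tymoczko \cite{SomTym06}, so there is no internal proof to compare against. Your argument is correct and self-contained, and the chain-closure $\widehat{\cs}$ is exactly the right object: the decisive step, co-closedness of $\widehat{\cs}$, is handled properly --- for $i_m<b<i_{m+1}$ the inequalities $b<i_{m+1}\le h(i_m)\le h(b)$ put both $t_{i_m}-t_b$ and $t_b-t_{i_{m+1}}$ in $\Phi_h^+$, so the Weyl-type hypothesis forces one of them into $\cs$, and grafting works. The verifications of $N(z_\cs)\cap\Phi_h^+=\cs$, of $z_\cs^{-1}(-\Delta)\cap\Phi^+\subseteq\Phi_h^+$ via the ``simple roots are indecomposable in $N(z_\cs)$'' observation, and of (3) by telescoping are all sound. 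The only blemish is the phrase ``a filter has a unique minimal element equal to its minimum,'' which is false for a general up-set; what you actually need (and have) is that $F$ possesses a minimum, namely $z_\cs$ by (3), together with the fact that weak order is graded by length so that any $y\in F$ with $y<_L z'$ forces some lower cover of $z'$ into $F$ --- whence $z'$ minimal in $F$ implies $z'=z_\cs$. With that wording repaired, the proof stands, and it is essentially the type-$A$ specialization of the ideal-theoretic argument in \cite{SomTym06}.
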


Given a fixed $\cs\in \cw_h$, we now consider
\[
\cw(\cs, h) := \{ w\in \Symm_n \mid N(w)\cap\Phi_h^+ = \cs \} \subseteq \Symm_n
\]
i.e., $\cw(\cs,h)$ is the set of permutations whose associated subset of Weyl type is exactly $\cs$. Note that $\cw(\cs,h)$ is always non-empty for any $\cs \in \cw_h$ by Theorem~\ref{thm.Sommers-Tymoczko}, and we obtain a partition $\Symm_n = \bigsqcup_{\cs\in \cw_h} \cw(\cs,h)$.   Recall that (left) \textbf{weak Bruhat order} is the partial order on $\Symm_n$ defined by
\[
u \leq_L v  \;  \textup{ if } \;  v= s_{i_1}\cdots s_{i_k} u \textup{ for simple reflections $s_{i_1}, \ldots, s_{i_k}$ such that $\ell(v) = \ell(u)+k$.}
\]
The weak Bruhat order is stronger than Bruhat order in the sense that $u \leq_L v$ implies $u\leq v$ for all $u,v\in \Symm_n$.  Note that $u\leq_L v$ if and only if $N(u) \subseteq N(v)$ \cite[Prop.~3.1.3]{BjoBre05}.
The following lemma tells us that $\cw(\cs,h)$ is a weak Bruhat interval.  A proof can be found in Appendix~\ref{sec.appendix}. 

\begin{Lemma}\label{lemma.interval} 
Let $h: [n]\to[n]$ be a Hessenberg function and $\cs\in \cw_h$.  There exist elements $z_\cs, w_\cs \in \cw(\cs, h)$ such that $\cw(\cs,h)$ is precisely the weak (left) Bruhat interval 
\[
[z_\cs, w_\cs]_{L} := \{ v\in \Symm_n \mid z_\cs \leq_L v \leq_L w_\cs \}.
\]
\end{Lemma}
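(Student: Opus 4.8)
The plan is to exhibit $z_\cs$ and $w_\cs$ explicitly as the minimum and maximum of $\cw(\cs,h)$ under $\leq_L$, and then to deduce that $\cw(\cs,h)$ is the entire weak interval by a short monotonicity argument. Throughout I use the characterization $u \leq_L v \iff N(u) \subseteq N(v)$ recalled above, so that $\leq_L$ is simply inclusion of inversion sets. For the minimum I would take $z_\cs$ to be the element produced by Theorem~\ref{thm.Sommers-Tymoczko}(2), which lies in $\cw(\cs,h)$ since $N(z_\cs)\cap\Phi_h^+ = \cs$ by construction. If $w \in \cw(\cs,h)$ then $\cs = N(w)\cap\Phi_h^+ \subseteq N(w)$, so Theorem~\ref{thm.Sommers-Tymoczko}(3) gives $N(z_\cs) \subseteq N(w)$, i.e.\ $z_\cs \leq_L w$. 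Hence $z_\cs$ is the minimum of $\cw(\cs,h)$.

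For the maximum, the key idea is to dualize using the longest element $w_0 \in \Symm_n$. Recall the standard identity $N(w_0 w) = \Phi^+ \setminus N(w)$, so that left multiplication by $w_0$ corresponds to complementation of inversion sets and therefore reverses $\leq_L$. Since the defining condition of a subset of Weyl type is symmetric in $\cs$ and $\Phi_h^+ \setminus \cs$, the complement $\cs' := \Phi_h^+ \setminus \cs$ again lies in $\cw_h$. Moreover, for any $w \in \Symm_n$,
\[
N(w_0 w)\cap \Phi_h^+ = \bigl(\Phi^+ \setminus N(w)\bigr)\cap \Phi_h^+ = \Phi_h^+\setminus\bigl(N(w)\cap\Phi_h^+\bigr),
\]
so that $w \in \cw(\cs,h)$ if and only if $w_0 w \in \cw(\cs',h)$. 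Thus $w \mapsto w_0 w$ is an order-reversing bijection $\cw(\cs,h) \xrightarrow{\sim} \cw(\cs',h)$, and I would set $w_\cs := w_0\, z_{\cs'}$. Applying the minimum argument of the previous paragraph to $\cs'$ shows $z_{\cs'}$ is the minimum of $\cw(\cs',h)$; transporting this back through the order-reversing bijection shows $w_\cs$ is the maximum of $\cw(\cs,h)$, and in particular $w_\cs \in \cw(\cs,h)$.

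It then remains to verify $\cw(\cs,h) = [z_\cs, w_\cs]_L$. The inclusion $\cw(\cs,h) \subseteq [z_\cs, w_\cs]_L$ is exactly the statement that $z_\cs$ and $w_\cs$ are the minimum and maximum. For the reverse inclusion, suppose $z_\cs \leq_L v \leq_L w_\cs$, i.e.\ $N(z_\cs) \subseteq N(v) \subseteq N(w_\cs)$. Intersecting with $\Phi_h^+$ and using $N(z_\cs)\cap\Phi_h^+ = \cs = N(w_\cs)\cap\Phi_h^+$ together with monotonicity of intersection forces $N(v)\cap\Phi_h^+ = \cs$, hence $v \in \cw(\cs,h)$. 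This completes the argument.

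The one genuine obstacle is the existence of the maximum: unlike the minimum, it is not directly supplied by the Sommers--Tymoczko theorem, and one cannot obtain it by the naive guess $\cs \cup (\Phi^+\setminus\Phi_h^+)$, since that set need not be an inversion set. For instance, when $h=(2,3,3)$ and $\cs=\emptyset$ one has $\Phi_h^+ = \{t_1-t_2,\, t_2-t_3\}$ and $\cs \cup (\Phi^+\setminus\Phi_h^+) = \{t_1-t_3\}$, whose complement $\{t_1-t_2,\,t_2-t_3\}$ is not $\Phi^+$-closed, so it is not biclosed. The duality through $w_0$ is precisely what circumvents a direct construction, reducing the existence of the maximum for $\cs$ to the already-established existence of the minimum for the complementary Weyl-type subset $\cs'$.
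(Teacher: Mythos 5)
Your proposal is correct and follows essentially the same route as the paper's Appendix: take $z_\cs$ from Sommers--Tymoczko, obtain $w_\cs$ as $w_0 z_{\bar\cs}$ via the order-reversing involution $w\mapsto w_0w$ (which exchanges $\cw(\cs,h)$ and $\cw(\Phi_h^+\setminus\cs,h)$ because $N(w_0w)=\Phi^+\setminus N(w)$), and then deduce the reverse inclusion by intersecting the chain of inversion sets with $\Phi_h^+$. The only difference is cosmetic packaging, plus your (correct) side remark explaining why the naive candidate $\cs\cup(\Phi^+\setminus\Phi_h^+)$ need not be an inversion set.
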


In order to apply the results of~\cite{Cho-Hong-Lee1} needed below, we now introduce a graph $\Gamma_h$ uniquely determined by a Hessenberg function $h$.   

\begin{Definition}  Let $h:[n] \to [n]$ be a Hessenberg function.  The \textbf{incomparability graph} $\Gamma_h = (V(\Gamma_h), E(\Gamma_h))$ is the graph on vertex set $V(\Gamma_h) = [n]$ with edges $E(\Gamma_h):= \{ \{i,j\} \mid j<i \textup{ and } i\leq h(j) \}$.
\end{Definition}

\begin{Example}\label{example: Gwh} 
The incomparability graph $\Gamma_h$ for $h=(2,4,4,4)$ and $h=(3,4,5,5,5)$ are given below.
\vspace*{.15in}
\[\xymatrix{1 \ar@{-}[r] & 2 \ar@{-}[r] \ar@{-}@/^1.5pc/[rr] & 3 \ar@{-}[r] & 4 & & 
1 \ar@{-}[r]\ar@{-}@/^1.5pc/[rr]  & 2 \ar@{-}[r]\ar@{-}@/^1.5pc/[rr]  & 3 \ar@{-}[r] \ar@{-}@/^1.5pc/[rr]  & 4 \ar@{-}[r]  & 5
}\]
\end{Example}

The incomparability graph for $h$ plays a key role in the results of~\cite{Harada-Precup2019} and also appears in~\cite{Cho-Hong-Lee1, Cho-Hong-Lee2} (with the notation $G_{e,h}$).  An \textbf{acyclic orientation} $o$ of $\Gamma_h$ is an assignment of a direction (i.e.~orientation) to each edge $e\in E(\Gamma_h)$ such that the resulting oriented graph contains no directed cycles.  Given $\cs\in \cw_h$, we obtain an orientation $o_h(\cs)$ of $\Gamma_h$ defined by the rule
\begin{equation}\label{eq: def orientation}
 \xymatrix{ j & \ar[l] i } \; \textup{ for $\{i,j\}\in E(\Gamma_h)$ with $j<i$ if and only if $t_j-t_i \in \cs$.  } 
\end{equation} 
In other words, $o_h(\cs)$ is obtained by orienting edges corresponding to the roots in $\cs$ to the left, and oriented all other edges to the right.

\begin{Example}\label{ex.acyclic} Let $n=4$.  Consider the following acyclic orientations, the first of $\Gamma_{h}$ for $h=(3,4,4,4)$, and the second of $\Gamma_{h}$ for $h=(2,3,4,4)$.

\vspace*{.15in}
\[\xymatrix{1 \ar[r]  & 2 \ar@/^1.5pc/[rr] & 3  \ar@/_1.5pc/[ll] \ar[l]\ar[r]  & 4  
&&&
1 & 2 \ar[l] \ar[r]& 3 \ar[r] & 4
}\]

The acyclic orientation on the LHS is $o_{(3,4,4,4)}(\{t_2-t_3, t_1-t_3 \})$ and the one on the RHS is $o_{(2,3,4,4)}(\{ t_1-t_2 \})$.  
\end{Example}

The following observation yields a bijection between acyclic orientations and subsets of Weyl type. 

\begin{Lemma}\label{Lemma.orientations} The set of all acyclic orientations of $\Gamma_h$ is precisely $\{ o_h(\cs) \mid \cs\in \cw_h \}$.
\end{Lemma}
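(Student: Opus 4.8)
The plan is to reduce the statement to a single clean equivalence. First I would record that the edges of $\Gamma_h$ are in natural bijection with the roots of $\Phi_h^+$: an edge $\{i,j\}$ with $j<i$ satisfies $i\le h(j)$, which is exactly the condition that $t_j-t_i\in\Phi_h^+$. Under this identification the rule~\eqref{eq: def orientation} shows that $\cs\mapsto o_h(\cs)$ is a bijection from the collection of \emph{all} subsets of $\Phi_h^+$ onto the collection of all orientations of $\Gamma_h$, since an edge is oriented to the left precisely when its root lies in $\cs$ and to the right otherwise. Thus the lemma is equivalent to the assertion that, for a subset $\cs\subseteq\Phi_h^+$, the orientation $o_h(\cs)$ is acyclic if and only if $\cs\in\cw_h$, i.e.\ if and only if both $\cs$ and $\Phi_h^+\setminus\cs$ are $\Phi_h^+$-closed.

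For the direction $\cs\in\cw_h\Rightarrow o_h(\cs)$ acyclic, I would invoke Theorem~\ref{thm.Sommers-Tymoczko}(1) to write $\cs=N(w)\cap\Phi_h^+$ for some $w\in\Symm_n$. For an edge $\{i,j\}$ with $j<i$ one has $t_j-t_i\in\cs$ iff $t_j-t_i\in N(w)$ iff $w(j)>w(i)$; comparing with~\eqref{eq: def orientation}, this means $o_h(\cs)$ orients every edge toward the endpoint carrying the larger $w$-value. Hence traversing any directed edge strictly increases the $w$-value, so there can be no directed cycle and $o_h(\cs)$ is acyclic.

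For the converse I would argue the contrapositive: if $\cs\notin\cw_h$ then $o_h(\cs)$ contains a directed triangle. Suppose $\cs$ is not $\Phi_h^+$-closed (the failure of $\Phi_h^+\setminus\cs$ being symmetric), so there are $\alpha,\beta\in\cs$ with $\gamma:=\alpha+\beta\in\Phi_h^+\setminus\cs$. In type A a sum of two positive roots is a root only when the summands share a middle index, i.e.\ $\alpha=t_a-t_b$, $\beta=t_b-t_c$ with $a<b<c$ and $\gamma=t_a-t_c$; all three lie in $\Phi_h^+$ and so correspond to edges of $\Gamma_h$. Reading off~\eqref{eq: def orientation}, the edges for $\alpha,\beta\in\cs$ are oriented $b\to a$ and $c\to b$, while the edge for $\gamma\notin\cs$ is oriented $a\to c$, producing the directed cycle $a\to c\to b\to a$; the symmetric computation for a closure failure of $\Phi_h^+\setminus\cs$ yields the reversed triangle $a\to b\to c\to a$. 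In either case $o_h(\cs)$ is not acyclic, which gives the equivalence and hence the lemma.

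The genuinely delicate point is orientation bookkeeping: one must keep straight which endpoint a left-pointing arrow in~\eqref{eq: def orientation} designates and match this against membership in $\cs$. Once the slogan ``arrows point toward the larger $w$-value'' is pinned down, the forward direction is immediate from Sommers--Tymoczko, and the only substantive computation is the type-A root identity identifying closure failures with directed triangles. I expect that triangle reduction to be the crux, since it is what lets us avoid the a priori harder task of excluding long directed cycles directly.
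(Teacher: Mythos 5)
Your proof is correct, and it diverges from the paper's argument in one meaningful way. The forward direction (every $\cs\in\cw_h$ gives an acyclic orientation) is essentially the paper's: both write $\cs=N(w)\cap\Phi_h^+$ via Theorem~\ref{thm.Sommers-Tymoczko}(1) and observe that the arrows of $o_h(\cs)$ always point toward the larger $w$-value, which forbids directed cycles. The difference is in the converse. The paper restricts to $K_n$ first, identifies acyclic orientations of $K_n$ with permutations (equivalently, with inversion sets $N(w)$ via Kostant), and then argues that every acyclic orientation of $\Gamma_h$ extends to an acyclic orientation of $K_n$ and is therefore of the form $o_h(N(w)\cap\Phi_h^+)$; this leans on the extension-to-$K_n$ fact (implicitly, a topological sort) and on the ``only if'' half of Theorem~\ref{thm.Sommers-Tymoczko}(1). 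You instead prove the contrapositive directly: a failure of $\Phi_h^+$-closure for $\cs$ or its complement produces, via the type-A identity that a sum of two positive roots must be a chained pair $t_a-t_b$, $t_b-t_c$, a directed triangle on $\{a,b,c\}$ (all three edges lie in $\Gamma_h$ because the closure condition only tests sums that land in $\Phi_h^+$), and your orientation bookkeeping for the two cases checks out. What your route buys is a self-contained, purely local certificate of non-acyclicity that avoids both the extension lemma and the harder direction of Sommers--Tymoczko; what the paper's route buys is brevity and the conceptual picture that $\cw_h$ is exactly the set of restrictions $N(w)\cap\Phi_h^+$, which is reused elsewhere in the paper. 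Both are valid proofs of the lemma.
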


\begin{proof}[Sketch of Proof] 
Consider first the case in which $h=(n,n,\ldots, n)$ so $\Gamma_h=K_n$ is the complete graph on $n$ vertices.  It is a simple exercise to show that there are precisely $n!$ acyclic orientations of $K_n$, each uniquely determined by a permutation $w$ according to the rule that $j\longleftarrow i$ if and only if $j<i$ and $w(j)>w(i)$, or equivalently, $t_j -t_i\in N(w)$.
Since every subset of Weyl type of the form $N(w)$ for a unique $w\in \Symm_n$, the claim now follows.

Returning to the case of an arbitrary Hessenberg function, note that $\Gamma_h$ is a subgraph of $K_n$.  Let $\cs\in \cw_h$. By Theorem~\ref{thm.Sommers-Tymoczko} there exists $w\in \Symm_n$ such that $\cs=N(w)\cap \Phi_h^+$. The orientation $o_h(\cs)$ defined in~\eqref{eq: def orientation} is the orientation induced by the that of $w$ on $K_n$ as in the previous paragraph.  This shows that $o_h(\cs)$ is an acyclic orientation.  Since every acyclic orientation of $\Gamma_h$ is the restriction of an acyclic orientation on $K_n$ it follows that every acyclic orientation is of this form.
\end{proof} 

\begin{Remark} 
In~\cite{Cho-Hong-Lee2}, the authors define and study an equivalence class of permutations. 
In their notation, the equivalence class $[w]_h$ from~\cite[Definition 3.4]{Cho-Hong-Lee2} is precisely the set $\cw(\cs,h)$ above for $\cs = N(w)\cap \Phi_h^+$. 
We also note that in \cite{Cho-Hong-Lee1, Cho-Hong-Lee2} the authors use the language of subgraphs $G_{w,h}$ of $\Gamma_h$ for 
different permutations $w$, but this data can be equivalently characterized by acyclic orientations, which is what we choose to do in this manuscript. 
\end{Remark}

The relation between $\cw(\cs,h)$ (and specifically the maximal element $w_\cs$ of $\cw(\cs,h)$) to the acyclic orientation $o_h(\cs)$ is developed further in the next section.

%==================
\section{Reachability}\label{sec.reachability}
%==================

A key contribution of this manuscript is to connect the work of Cho, Hong, and Lee in \cite{Cho-Hong-Lee1}---in which they use the combinatorial notion of reachability to study Hessenberg Schubert cells---to the theory of subsets of Weyl type. In this section, we make this connection precise. The essential result is Proposition~\ref{prop.reachability}, which is the technical engine driving the proof of our main theorem (Theorem~\ref{theorem: main}). 

We begin with the definition of reachability, taken from \cite{Cho-Hong-Lee1}, 
relating two vertices $i$ and $j$ on the incompatibility graph $\Gamma_h$.

\begin{Definition}  
Let $h: [n] \to [n]$ be a Hessenberg function and $\Gamma_h$ its associated incomparability graph, equipped with an 
acyclic orientation $o_h(\cs)$. Suppose $i>j$.  We say that $i$ is \textbf{reachable from $j$ with respect to $\cs$
 (or $o_h(\cs)$)} if $i=v_m$ and $j=v_0$ and there exists a sequence of vertices $j=v_0< v_1< \cdots < i=v_m$ 
 of $\Gamma_h$
 such that  there is an oriented edge from each $v_\ell$ to $v_{\ell+1}$, i.e., 
 there is a sequence of oriented edges of the form 
 $j=v_0 \longrightarrow v_1 \longrightarrow \cdots \longrightarrow v_{m-1}\longrightarrow v_m = i$ in $\Gamma_h$ (equipped 
 with the orientation $o_h(\cs)$).  We allow $m$ 
 to be $0$, that is, $j$ is always reachable from $j$. 
\end{Definition}

We say that a vertex $k$ in an oriented graph is a \textbf{source} (with respect to the given orientation)
 if all edges adjacent to $k$ point ``out'' of $k$, i.e.,
each such edge is of the form $k \longrightarrow i$ for all $i$ adjacent to $k$.  Since our incomparability graphs have vertices labelled by sets of 
positive integers $\{1,2,\cdots, n\}$, we say that a vertex $k$ is the \textbf{largest source} if $k$ is a source and moreover, 
if $j$ is another source, then $k>j$. The next lemma shows that any vertex larger than the largest source $k$ is reachable 
from $k$. 

\begin{Lemma} \label{lemma.largest.source} Let $h:[n]\to [n]$ be a Hessenberg function, $\Gamma_h$ its 
associated incomparability graph, and $\cs\in \cw_h$. 
Suppose that $k$ is the largest source of $\Gamma_h$ with respect to the acyclic orientation $o_h(\cs)$ and $i$ is a vertex
with $i>k$. Then $i$ is reachable from $k$. 
\end{Lemma}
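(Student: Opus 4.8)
The plan is to argue by contradiction: assuming some vertex $i>k$ is not reachable from $k$, I will produce a source of $\Gamma_h$ strictly larger than $k$, contradicting the hypothesis that $k$ is the largest source. The two structural inputs I will exploit are the acyclicity of $o_h(\cs)$ and the ``interval'' shape of $\Gamma_h$: since $h$ is nondecreasing, for any vertex $v$ the set of neighbors smaller than $v$ is a contiguous block $\{a_v, a_v+1, \dots, v-1\}$ with $a_v = \min\{j : h(j)\ge v\}$, and moreover $h(u)\ge v$ whenever $u\ge a_v$. I will also repeatedly use the defining rule~\eqref{eq: def orientation} together with the elementary fact that reachability is preserved under appending an outgoing edge to a larger vertex: if $v$ is reachable from $k$ and there is an oriented edge $v\longrightarrow v'$ with $v'>v$, then $v'$ is reachable from $k$.

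First I would isolate the obstruction. Let $U$ be the set of vertices $>k$ not reachable from $k$, suppose $U\neq\emptyset$, and set $m=\min U$, so that every vertex in $\{k+1,\dots,m-1\}$ is reachable. The base case is the claim that every smaller neighbor of $m$ has its edge pointing out of $m$. If the smallest neighbor $a_m$ of $m$ satisfied $a_m\le k$, then $k$ would be a neighbor of $m$, and since $k$ is a source the edge would read $k\longrightarrow m$, making $m$ reachable --- a contradiction. Hence $a_m>k$, so all smaller neighbors of $m$ lie in $\{k+1,\dots,m-1\}$, are therefore reachable, and by the preservation fact their edges must point out of $m$ (otherwise $m$ would itself be reachable).

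Next I would climb. I plan to build a strictly increasing sequence $m=p_0<p_1<\cdots$ of vertices, each satisfying the invariant that all of its smaller neighbors point out of it. Given $p_t$ with this property and $p_t>k$: if $p_t$ is a source the argument halts with the desired contradiction, since it is a source strictly larger than $k$. Otherwise $p_t$ has an incoming edge, which by the invariant must come from a larger neighbor; let $p_{t+1}$ be the smallest larger neighbor of $p_t$ with $p_{t+1}\longrightarrow p_t$. The crux is to verify the invariant for $p_{t+1}$, i.e.\ that every smaller neighbor $u$ of $p_{t+1}$ satisfies $p_{t+1}\longrightarrow u$. Because $p_{t+1}\le h(p_t)$ one has $a_{p_{t+1}}\le p_t$, so each such $u$ is either equal to $p_t$, a smaller neighbor of $p_t$, or lies strictly between $p_t$ and $p_{t+1}$, and in every case the interval structure guarantees the relevant edges exist. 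If some edge $u\longrightarrow p_{t+1}$ pointed into $p_{t+1}$, then combining it with $p_{t+1}\longrightarrow p_t$ and an edge $p_t\longrightarrow u$ --- supplied by the invariant for $p_t$ when $u<p_t$, or by the minimality of $p_{t+1}$ when $p_t<u<p_{t+1}$ --- would produce a directed triangle $p_{t+1}\longrightarrow p_t\longrightarrow u\longrightarrow p_{t+1}$, contradicting acyclicity. Hence every such edge points out of $p_{t+1}$, establishing the invariant.

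Finally, the sequence $p_0<p_1<\cdots$ is strictly increasing and bounded by $n$, so it cannot continue indefinitely; the only way the construction halts is for some $p_T$ to be a source, and since $p_T>k$ this contradicts the maximality of $k$ among sources. Therefore $U=\emptyset$ and every $i>k$ is reachable from $k$. I expect the main obstacle, and the heart of the proof, to be precisely maintaining the ``all smaller neighbors point out'' invariant as one climbs: controlling the larger neighbors of a non-reachable vertex is not automatic, and it is exactly here that the directed-triangle argument --- marrying acyclicity with the interval structure of $\Gamma_h$ --- is required.
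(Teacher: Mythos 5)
Your proof is correct, and while it shares the paper's overall strategy --- argue by contradiction and exhibit a source of $\Gamma_h$ strictly larger than $k$ --- the mechanism is genuinely different. The paper restricts to the induced subgraph on the unreachable vertices, invokes the fact that any acyclic orientation has a source to pick a source $i'$ of that subgraph, and then shows $i'$ is a source of all of $\Gamma_h$; the delicate case there is an edge $j \to i'$ with $j > i'$ and $j$ reachable, which the paper handles by splicing $i'$ into the increasing path witnessing the reachability of $j$ (using the interval structure of $\Gamma_h$) to manufacture a directed cycle. You instead start from the \emph{minimal} unreachable vertex $m$ and climb along incoming edges, maintaining the invariant that all smaller neighbors point outward; your cycle argument is a purely local directed triangle on $\{u, p_t, p_{t+1}\}$, justified by the same interval structure plus the minimality in your choice of $p_{t+1}$. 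A notable structural difference is that your climbing sequence is not confined to the unreachable set (the $p_t$ for $t \ge 1$ may well be reachable from $k$), so your terminal source need not coincide with the paper's $i'$. What your route buys is a more elementary, self-contained combinatorial argument (no appeal to sources of induced subgraphs, and only three-vertex cycles); what the paper's route buys is a shorter setup, since the single invocation of ``acyclic implies a source exists'' replaces your termination-of-a-monotone-sequence bookkeeping. Both hinge on the same two facts: reachability is killed by an incoming edge from a reachable smaller vertex, and the contiguity of neighborhoods in $\Gamma_h$ supplies the edges needed to close a cycle.
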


\begin{proof}

To prove the claim of the lemma, it suffices to show that the set
\begin{eqnarray}\label{eqn.no.source}
\{ i>k \mid \textup{ $i$ is not reachable from $k$ }  \}
\end{eqnarray}
is empty.  For the sake of obtaining a contradiction, suppose not, and consider the oriented subgraph $\Gamma'$ induced by the vertices in set~\eqref{eqn.no.source}.  Since the original oriented graph $\Gamma_h$ is 
acyclic, so is the subgraph. Any acyclic orientation must have a source, 
 so there exists a vertex $i'$ which is a source of $\Gamma'$. 
We claim that the the vertex $i'$ must also be a source of the original oriented graph $\Gamma_h$. 
To see this, we must show that any edge $\{j, i'\}\in E(\Gamma_h)$ has orientation $i'\longrightarrow j$.
To argue this, 
we first observe that $i' > h(k)$, because 
if $i' \leq h(k)$ then by definition of $\Gamma_h$ there would be an edge from $k$ to $i'$, and since $k$ is a source
by assumption, the edge between $k$ and $i'$ would be oriented as $k\longrightarrow i'$, making $i'$ reachable from $k$. 
This contradicts the assumption that $i'$ is in the set~\eqref{eqn.no.source}. 
This implies that there is no edge in $\Gamma_h$ between $i'$ and any vertex $j$ with $j \leq k$. 
Thus we may now assume without loss of generality that $j > k$. We take cases.

Suppose that $k < j < i'$. If there is no edge between $j$ and $i'$ in $\Gamma_h$ then there is nothing to prove, so 
suppose there is an edge. If $j$ is not reachable by $k$, then $j$ is in the set~\eqref{eqn.no.source} and we have 
assumed that $i'$ is a source of the subgraph, so we must have $i' \longrightarrow j$. On the other hand, if $j$ is reachable
by $k$, then by reasoning similar to the above, we must also have $j \longleftarrow i'$ since otherwise $i'$ would be reachable by
$k$. 

Next suppose  $j>i'$. Again, if there is no edge between $j$ and $i'$ then there is 
nothing to prove, so we may suppose $\{i',j\}$ is an edge of $\Gamma_h$.  If $j$ is not reachable by $k$ then 
by the same reasoning as above we already know the edge is oriented as $i' \longrightarrow j$, so suppose $j$ is reachable by $k$.
Recall that we wish to show $i' \longrightarrow j$ in $o_h(\cs)$. Suppose for the sake of contradiction that $i'\longleftarrow j$ instead.  
Since $j$ is reachable from $k$, there exists a sequence $v_0 < v_1< \cdots < v_m$ of vertices such that $k=v_0 \longrightarrow j_1 \longrightarrow \cdots \longrightarrow v_{m-1} \longrightarrow v_m=j$ in $o_h(\cs)$.  Let $v_\ell$ be the largest vertex in that list such that $v_\ell<i'$.  Since $\{v_\ell, v_{\ell+1}\}$ is an edge in $\Gamma_h$ and $i'<v_{\ell+l}$ we conclude that $\{v_\ell, i'\}\in E(\Gamma_h)$.  Since $v_\ell$ is reachable from $k$, by the same reasoning as above we know that $v_\ell \longleftarrow i'$ and we may visualize the graph as 
\vspace*{.15in}
\[
\xymatrix{ v_\ell \ar@/^1.5pc/[rr] & \ar[l]  i' &  v_{\ell+1} \ar[r] & \cdots \ar[r] &v_m= j \ar@/_2pc/[lll]  }
\]  
in $o_h(\cs)$, which shows that there is a cycle starting and ending at $i'$, contradicting the fact that $o_h(\cs)$ is an acyclic orientation. 
Thus we must have $i' \longrightarrow j$. 

We have now shown that $i'$ is a source in $\Gamma_h$, contradicting that $k$ is the largest source. 
Therefore~\eqref{eqn.no.source} is indeed empty as was to be shown.
\end{proof}

The next lemma shows that reachability implies an inequality among entries in the one-line notation of $w$ for $w \in \cw(\cs,h)$.

\begin{Lemma}\label{lemma: reachable and w}
Let $j\leq i$ be vertices in $\Gamma_h$ and $\cs\in \cw_h$.  If $i$ is reachable from $j$ with respect to $o_h(\cs)$, 
then $w(j)\leq w(i)$ for all $w\in \cw(\cs,h)$.
\end{Lemma}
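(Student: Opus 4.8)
The plan is to prove the statement by induction on the length $m$ of the reachability path, reducing the general claim to a single basic inequality that comes from the defining relationship between $\cs$ and the inversion set $N(w)$.

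First I would dispose of the base case $m=0$, which corresponds to $i=j$, where the inequality $w(j)\leq w(i)$ is trivially an equality. For the inductive step, suppose $i$ is reachable from $j$ via a directed path $j=v_0\longrightarrow v_1\longrightarrow\cdots\longrightarrow v_m=i$ in $o_h(\cs)$. Then $v_{m-1}$ is reachable from $j$ by the truncated path, so by the inductive hypothesis $w(j)\leq w(v_{m-1})$. It therefore suffices to establish the single-edge inequality $w(v_{m-1})\leq w(v_m)$, and the full claim follows by transitivity of $\leq$ on the integers.

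So the crux of the matter is the following local statement: if there is an oriented edge $a\longrightarrow b$ in $o_h(\cs)$ with $a<b$, then $w(a)\leq w(b)$ for every $w\in\cw(\cs,h)$. I would argue this directly from the definition of the orientation in~\eqref{eq: def orientation}. An oriented edge $a\longrightarrow b$ with $a<b$ means, by the rule defining $o_h(\cs)$ (orienting an edge $\{a,b\}$ with $a<b$ to the left precisely when $t_a-t_b\in\cs$), that $t_a-t_b\notin\cs$. Since $\{a,b\}$ is an edge of $\Gamma_h$ with $a<b$ we have $t_a-t_b\in\Phi_h^+$, and because $w\in\cw(\cs,h)$ satisfies $N(w)\cap\Phi_h^+=\cs$, the condition $t_a-t_b\notin\cs$ forces $t_a-t_b\notin N(w)$. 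By the definition of the inversion set, $t_a-t_b\notin N(w)$ with $a<b$ means exactly that $w$ does not invert the pair $(a,b)$, i.e. $w(a)< w(b)$ (in fact a strict inequality, since $w$ is a permutation). This gives the local step.

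I do not expect any serious obstacle here; the proof is a clean unwinding of definitions once the induction on path length is set up, and the only point requiring care is making sure the orientation convention of~\eqref{eq: def orientation} is read in the correct direction (edges \emph{not} in $\cs$ point from the smaller to the larger vertex, and these are precisely the non-inversions of $w$). One should note that the inequality is automatically strict along each edge, so that $w(j)< w(i)$ whenever $i\neq j$, though the weak inequality stated in the lemma is all that is needed. The fact that the conclusion holds for \emph{all} $w\in\cw(\cs,h)$ simultaneously is immediate, since the argument only uses the defining property $N(w)\cap\Phi_h^+=\cs$ shared by every element of $\cw(\cs,h)$.
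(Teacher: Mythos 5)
Your proof is correct and is essentially the same as the paper's: both reduce the claim to the observation that each oriented edge $v_{\ell-1}\longrightarrow v_\ell$ along the path corresponds to a root $t_{v_{\ell-1}}-t_{v_\ell}\in\Phi_h^+\setminus\cs$, hence a non-inversion of $w$, and then chain the resulting inequalities. The paper simply concatenates the edge inequalities directly rather than packaging the argument as an induction on path length, which is a purely presentational difference.
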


\begin{proof} Let $w\in \cw(\cs,h)$.  Suppose $i$ is reachable from $j$ so we have a sequence of vertices $v_0< v_1< \cdots< v_m$ such that $j=v_0 \longrightarrow v_1 \longrightarrow \cdots \longrightarrow v_{m-1} \longrightarrow v_m =i$ in $o_h(\cs)$.  Thus for each $\ell$ such that $1\leq \ell \leq m$, we have $t_{v_{\ell-1}} - t_{v_{\ell}} \in \Phi_h^+\setminus\cs$ and since $\cs = N(w)\cap \Phi_h^+$, 
we get $t_{v_{\ell-1}} - t_{v_{\ell}} \notin N(w)$. This means $w(v_{\ell-1}) \leq w(v_\ell)$ for all  $1 \leq \ell \leq m$, and by putting the inequalities together we obtain $w(j)\leq w(i)$, as desired. 
\end{proof}

It also turns out that the location of $1$ in the one-line notation of $w \in \cw(\cs,h)$ is significant.

\begin{Lemma} \label{lemma.source1} Let $h:[n]\to [n]$ be a Hessenberg function and $\cs\in \cw_h$. If $w\in \cw(\cs,h)$, then 
$w^{-1}(1)$ is a source of $\Gamma_h$ equipped with the orientation $o_h(\cs)$.
\end{Lemma}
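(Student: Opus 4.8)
The plan is to unwind both sides of the statement into conditions on roots: the definition of ``source'' (via the orientation rule $o_h(\cs)$) and the hypothesis $w \in \cw(\cs,h)$ (via the identity $\cs = N(w)\cap\Phi_h^+$). Everything will then follow from the single observation that, writing $k := w^{-1}(1)$, the value $w(k)=1$ is minimal among all entries of the one-line notation of $w$.

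First I would recall that $k$ is a source of $\Gamma_h$ equipped with $o_h(\cs)$ precisely when every edge of $\Gamma_h$ incident to $k$ is oriented away from $k$. The edges incident to $k$ fall into two families: edges $\{j,k\}$ joining $k$ to a smaller vertex $j<k$ (which occur exactly when $k\le h(j)$), and edges $\{k,i\}$ joining $k$ to a larger vertex $i>k$ (which occur exactly when $i\le h(k)$). Applying the orientation rule~\eqref{eq: def orientation}, the edge $\{j,k\}$ with $j<k$ is oriented $k \longrightarrow j$ (out of $k$) if and only if $t_j - t_k \in \cs$, while the edge $\{k,i\}$ with $i>k$ is oriented $k \longrightarrow i$ (out of $k$) if and only if $t_k - t_i \notin \cs$. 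Thus proving that $k$ is a source reduces to establishing two claims: (i) $t_j - t_k \in \cs$ for every $j<k$ with $k\le h(j)$, and (ii) $t_k - t_i \notin \cs$ for every $i>k$ with $i\le h(k)$.

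I would then verify (i) and (ii) directly from $\cs = N(w)\cap\Phi_h^+$. For (i), the conditions $j<k$ and $k\le h(j)$ say exactly that $t_j - t_k \in \Phi_h^+$; since $w(k)=1$ is minimal and $w(j)\neq 1$, we have $w(j)>w(k)$, so $t_j - t_k \in N(w)$ and hence $t_j - t_k \in N(w)\cap\Phi_h^+ = \cs$. For (ii), the conditions $k<i$ and $i\le h(k)$ give $t_k - t_i \in \Phi_h^+$; but $w(k)=1 < w(i)$, so $t_k - t_i \notin N(w)$ and therefore $t_k - t_i \notin \cs$. Combining (i) and (ii) shows that every edge incident to $k=w^{-1}(1)$ is oriented away from it, so $k$ is a source.

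This argument is essentially bookkeeping, so I do not expect a serious obstacle; the only point demanding genuine care is getting the orientation convention of~\eqref{eq: def orientation} exactly right, namely which endpoint the arrow points toward, and hence whether membership of $t_j-t_k$ (respectively $t_k-t_i$) in $\cs$ corresponds to the edge pointing \emph{into} or \emph{out of} $k$. The conceptual content is simply that the position carrying the minimal value $1$ wins every inversion comparison, which forces every incident edge to point away from it.
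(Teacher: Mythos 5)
Your proof is correct and follows essentially the same route as the paper's: observe that $w(k)=1$ being minimal forces $t_j-t_k\in N(w)$ for all $j<k$ and $t_k-t_i\notin N(w)$ for all $i>k$, intersect with $\Phi_h^+$ to land in (or out of) $\cs$, and read off the orientation of each incident edge from~\eqref{eq: def orientation}. Your version is slightly more explicit about which incident edges actually exist in $\Gamma_h$ and about the arrow convention, but the argument is the same.
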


\begin{proof}  Suppose $j=w^{-1}(1)$.  It follows directly from the definition of the inversion set that
\[
\{ t_1-t_j, t_2-t_j, \ldots, t_{j-1}-t_j \} \subseteq N(w)
\] 
since $w(j)=1$ is strictly smaller than any $w(1), w(2), \cdots, w(j-1)$. By similar reasoning, since $w(j)=1$ is also smaller than 
$w(j+1), \cdots, w(n)$, we have 
\[
\{t_j - t_{j+1}, t_j -t_{j+2}, \ldots, t_j-t_n\} \subseteq \Phi^+ \setminus N(w).
\]
As $w \in \cw(\cs,h)$ by hypothesis, we have that $\cs = N(w)\cap \Phi_h^+$. 
From the definition of $o_h(\cs)$ in~\eqref{eq: def orientation} we see that the edges $\{k,j\}$ with $k<j$ must be 
oriented to the left and the edges for $k>j$ are oriented toward the right. 
Thus $j$ is a source, as desired. 
\end{proof}

We now give an inductive construction using the Hessenberg function $h$ and graph $\Gamma_h$ that will be useful in what follows.  
Let $k\in [n]$. Consider the smaller graph on $n-1$ vertices which is obtained from $\Gamma_h$ by deleting vertex $k$ and all adjacent edges to $k$, and for convenience in our arguments below, relabeling the vertex set to be $\{2,3,\ldots,n\}$ (so $\{1,2,\ldots, k-1\}$ gets relabelled as $\{2,3,\ldots,k\}$ respectively, and the labels of $\{k+1,\ldots,n\}$ are unchanged). We let $h^{(k)}: \{2,3,\ldots,n\} \to \{2,3,\ldots,n\}$ denote the Hessenberg function whose associated graph $\Gamma_{h^{(k)}}$ is precisely the graph just described. 
Alternatively, suppose the Hessenberg function $h$ is visualized as a collection of boxes in an $n \times n$ array where the $(i,j)$-th box (in row $i$ and column $j$) is said to be in the collection if $i\leq h(j)$.
Then the collection of boxes in the $(n-1) \times (n-1)$ array corresponding to $h^{(k)}$ is obtained from that of $h$ by deleting the $k$-th row and column.

Set $\Psi:= \{t_i -t_j \mid i\neq j \textup{ and } i,j \in \{2,\ldots, n\}\}$.  Then $\Psi$ is a root system of type $A_{n-2}$, and the only difference between $\Psi$ and the standard root system is that we have shifted the index set to be $\{2,3,\cdots,n\}$ instead of $\{1,2,\cdots,n-1\}$.
Fix $k \in [n]$. In analogy with how we defined $\Phi_h^+$ above, let us define $\Psi^+_{h^{(k)}} := \{ t_i -t_j \in \Psi \mid i<j \textup{ and } j\leq h^{(k)}(i) \}$. 
From the description of the graph $\Gamma_{h^{(k)}}$ it is not hard to check that 
\begin{equation}\label{equation: induction} 
\Psi^+_{h^{(k)}} = u_k (\Phi_h^+) \cap\Psi
\end{equation} 
where $u_k:= s_1s_2 \cdots s_{k-1} \in \Symm_n$ (in one-line notation we have $u_k = [2, 3, \ldots, k, 1, k+1, k+2, \ldots,n]$). Here we take $u_1 := e \in \Symm_n$. 
Given $\cs \in \cw_h$, we can consider the orientation or $\Gamma_{h^{(k)}}$ induced by $o_h(\cs)$, which must necessarily be acyclic. 
Let $\cs' \subseteq \Psi_{h^{(k)}}^+$ denote the corresponding subset of Weyl type. By~\eqref{equation: induction} we have 
$$\cs' = u_k(\cs)\cap \Psi.$$ 

The next lemma relates certain elements in $\cw(\cs, h)$ with those in $\cw(\cs', h^{(k)})$. 

\begin{Lemma}\label{lemma: cs and cs prime} 
Let $y \in \langle s_2, s_3, \cdots, s_{n-1} \rangle$ and  $k$ be a source of $\Gamma_h$ with respect to $o_h(\cs)$ for $\cs \in \cw_h$. Let $u_k$ be as above and $w = yu_k\in \Symm_n$. Then $y \in \cw(\cs', h^{(k)})$ if and only if $w \in \cw(\cs, h)$. 
\end{Lemma}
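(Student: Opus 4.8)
The plan is to split $\Phi_h^+$ according to whether or not a root involves the index $k$, to show that the ``$k$-part'' of the defining equation $N(w)\cap\Phi_h^+=\cs$ holds automatically because $k$ is a source, and to identify the remaining ``$k$-free part'' with the condition $N(y)\cap\Psi^+_{h^{(k)}}=\cs'$ after transporting through $u_k$. Accordingly, I would set
\[
\ca:=\{\,t_i-t_k\in\Phi_h^+\mid i<k\,\}\cup\{\,t_k-t_j\in\Phi_h^+\mid j>k\,\}\quad\text{and}\quad \cb:=\Phi_h^+\setminus\ca,
\]
so that $\Phi_h^+=\ca\sqcup\cb$ and $\ca$ is precisely the set of roots of $\Phi_h^+$ involving $t_k$.

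I would first record two elementary facts. Since $w=yu_k$ with $u_k(k)=1$ and $y\in\langle s_2,\ldots,s_{n-1}\rangle$ fixing the index $1$, we have $w(k)=1$, hence $w(i)\geq 2$ for every $i\neq k$. Second, since $k$ is a source with respect to $o_h(\cs)$, every edge incident to $k$ points out of $k$, so the rule \eqref{eq: def orientation} gives $t_i-t_k\in\cs$ whenever $i<k$ and $\{i,k\}\in E(\Gamma_h)$, and $t_k-t_j\notin\cs$ whenever $j>k$ and $\{k,j\}\in E(\Gamma_h)$. In root-theoretic terms, this says exactly that $\cs\cap\ca=\{\,t_i-t_k\in\Phi_h^+\mid i<k\,\}$.

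The crux, which I would verify next, is that $N(w)\cap\ca=\cs\cap\ca$ independently of $y$. Indeed, for $t_i-t_k\in\Phi_h^+$ with $i<k$ we have $w(t_i-t_k)=t_{w(i)}-t_1\in\Phi^-$ because $w(i)\geq 2$, so $t_i-t_k\in N(w)$; and for $t_k-t_j\in\Phi_h^+$ with $j>k$ we have $w(t_k-t_j)=t_1-t_{w(j)}\in\Phi^+$ because $w(j)\geq 2$, so $t_k-t_j\notin N(w)$. Thus $N(w)\cap\ca=\{\,t_i-t_k\in\Phi_h^+\mid i<k\,\}=\cs\cap\ca$. Since $N(w)\cap\Phi_h^+=(N(w)\cap\ca)\sqcup(N(w)\cap\cb)$ and $\cs=(\cs\cap\ca)\sqcup(\cs\cap\cb)$, it follows that $w\in\cw(\cs,h)$ if and only if $N(w)\cap\cb=\cs\cap\cb$.

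It then remains to transport the condition on $\cb$ through $u_k$. As $u_k$ is order-preserving on $[n]\setminus\{k\}$ and sends $k$ to $1$, equation \eqref{equation: induction} shows it restricts to a bijection $\cb\to\Psi^+_{h^{(k)}}$, and $\cs'=u_k(\cs)\cap\Psi=u_k(\cs\cap\cb)$. For $\alpha\in\cb$, put $\beta:=u_k(\alpha)\in\Psi^+_{h^{(k)}}$; then $w(\alpha)=y(\beta)$, and since $y$ fixes $1$ and preserves $\Psi$, the root $y(\beta)$ is negative in $\Phi$ if and only if it is negative in $\Psi$, so $\alpha\in N(w)$ if and only if $\beta\in N(y)$. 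Hence $u_k(N(w)\cap\cb)=N(y)\cap\Psi^+_{h^{(k)}}$, and applying the bijection $u_k$ to the equivalence of the previous paragraph yields $w\in\cw(\cs,h)$ iff $N(y)\cap\Psi^+_{h^{(k)}}=\cs'$ iff $y\in\cw(\cs',h^{(k)})$, which is the claim. I expect the only real obstacle to be the bookkeeping: checking that $u_k$ preserves positivity on $\cb$ (so that inversions computed in $\Psi^+_{h^{(k)}}$ match those in $\Phi_h^+$) and that the source hypothesis translates into the correct membership statement for $\ca$. Both reduce to the sign computations above together with \eqref{equation: induction}, so no input beyond Theorem~\ref{thm.Sommers-Tymoczko} and the definitions is needed.
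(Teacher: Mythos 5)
Your proof is correct and follows essentially the same route as the paper's: both isolate the roots of $\Phi_h^+$ involving the index $k$, use the source hypothesis to pin down that part of $\cs$ and of $N(w)$, and transport the remaining ($k$-free) part through $u_k$ via \eqref{equation: induction}. The only difference is that where the paper invokes the length-additive decomposition $N(yu_k)=N(u_k)\sqcup u_k^{-1}N(y)$, you verify the needed identities by direct sign computations using $w(k)=1$, which also lets you handle both directions of the equivalence simultaneously.
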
 

\begin{proof} 
First suppose $w \in \cw(\cs, h)$, so $N(w) \cap \Phi_h^+ = \cs$. On the other hand, since the decomposition $w = yu_k$ satisfies $\ell(w) = \ell(y) + \ell(u_k)$ (note $u_k$ is a minimal coset representative of $\Symm_{n-1} \backslash \Symm_n$) we have $N(w) = N(u_k) \sqcup u_k^{-1} N(y)$ (cf. for instance \cite[Section 1.7]{Humphreys}). Combining these facts we obtain 
\begin{equation}\label{eq: 1} 
\cs = \left(N(u_k) \sqcup u_k^{-1} N(y)\right) \cap \Phi_h^+  \, \, \Leftrightarrow  \, \, u_k(\cs) = \left(u_k N(u_k) \sqcup N(y)\right) \cap u_k (\Phi_h^+)
\end{equation} 
and thus 
$$
\cs' = u_k(\cs) \cap \Psi = N(y) \cap \Psi \cap u_k \Phi_h^+ = N(y) \cap \Psi^+_{h^{(k)}}
$$
where the first and third equalities follows from~\eqref{equation: induction} and the second equality follows from an explicit computation of $u_k N(u_k)$ which shows that $u_k N(u_k) \cap \Psi = \emptyset$. This proves $y \in \cw(\cs', h^{(k)})$. 

To see the other direction, suppose $y \in \cw(\cs', h^{(k)})$. We want to show $w \in \cw(\cs, h)$, for which it suffices to prove~\eqref{eq: 1}. Since $k$ is a source we have 
\[
\cs = \{ t_j-t_i \in \cs \mid i, j \in [n] \setminus \{k\}  \} \sqcup\{ t_j -t_k \mid j<k,\, k\leq h(j) \}.
\]

Since $u_k([n]\setminus \{k\}) = \{2,\ldots, n\}$ we have 
\begin{eqnarray}\label{eqn.Weyl.type.induction}
 \{ t_j-t_i \in \cs \mid i, j \in [n] \setminus \{k\}  \} = u_k^{-1}(\cs')
\end{eqnarray}
and now the claim follows from the observation that~\eqref{equation: induction} implies 
$$
u_k^{-1}(\cs') = u_k^{-1}\left(N(y) \cap \Psi^+_{h^{(k)}}\right) = 
u_k^{-1} N(y) \cap \Phi_h^+
$$ 
and that $N(u_k) \cap \Phi_h^+ = \{t_j - t_k \mid j<k ,\, k \leq h(j) \}$. This completes the proof. 
\end{proof}

The next lemma is a kind of converse to Lemma~\ref{lemma.source1}.

\begin{Lemma}\label{lemma.source2} Let $\cs\in \cw_h$. If $k$ is a source of $\Gamma_h$ with respect to the orientation 
$o_h(\cs)$, then there exists $w\in \cw(\cs,h)$ such that $w^{-1}(1)=k$.
\end{Lemma}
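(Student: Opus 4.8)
The plan is to reduce the statement to the mere \emph{existence} of some element of $\cw(\cs', h^{(k)})$, where $\cs'$ and $h^{(k)}$ are the data associated to deleting the source $k$, and then to transport that element back to $\Symm_n$ by right multiplication by $u_k$. This is exactly the converse direction of the correspondence $y \mapsto yu_k$ that has already been set up, so essentially all of the analytic content is packaged into Lemma~\ref{lemma: cs and cs prime}; what remains is bookkeeping and one short computation.

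First I would form the Hessenberg function $h^{(k)} : \{2,\ldots,n\} \to \{2,\ldots,n\}$ obtained from $h$ by deleting the $k$-th row and column, together with its graph $\Gamma_{h^{(k)}}$ and the induced subset of Weyl type $\cs' = u_k(\cs)\cap\Psi \in \cw_{h^{(k)}}$; this is precisely the data introduced before Lemma~\ref{lemma: cs and cs prime}, and it is well-defined because the orientation of $\Gamma_{h^{(k)}}$ induced by $o_h(\cs)$ is acyclic. Since $\Psi$ is a root system of type $A_{n-2}$ and $\cs'$ is a subset of Weyl type with respect to $h^{(k)}$, Theorem~\ref{thm.Sommers-Tymoczko} applies verbatim in this shifted setting and guarantees that $\cw(\cs', h^{(k)})$ is non-empty. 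I would then choose any $y \in \cw(\cs', h^{(k)})$. Because the permutations parametrized by $\cw(\cs', h^{(k)})$ are permutations of $\{2,3,\ldots,n\}$, that is, elements of the stabilizer $\langle s_2, s_3, \ldots, s_{n-1}\rangle$ of $1$ in $\Symm_n$, this $y$ automatically satisfies $y(1)=1$.

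Next I would set $w := yu_k$. Since $k$ is a source of $\Gamma_h$ with respect to $o_h(\cs)$ by hypothesis and $y \in \langle s_2,\ldots,s_{n-1}\rangle$, Lemma~\ref{lemma: cs and cs prime} applies, and the implication ``$y \in \cw(\cs', h^{(k)}) \Rightarrow w \in \cw(\cs, h)$'' yields $w \in \cw(\cs, h)$. It then remains only to verify that $w^{-1}(1)=k$. From the one-line notation $u_k = [2,3,\ldots,k,1,k+1,\ldots,n]$ I read off $u_k(k) = 1$, so $w(k) = y(u_k(k)) = y(1) = 1$, whence $w^{-1}(1) = k$, as required.

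The one place I would be most careful is the index-shifting bookkeeping: one must check that the relabeling of $[n]\setminus\{k\}$ as $\{2,\ldots,n\}$ is consistent with the conventions under which $\cs' = u_k(\cs)\cap\Psi$ and $\Psi^+_{h^{(k)}} = u_k(\Phi_h^+)\cap\Psi$ (see~\eqref{equation: induction}) were defined, and that, under this identification, elements of $\cw(\cs', h^{(k)})$ genuinely fix $1$, so that the computation $w(k) = y(1) = 1$ is valid. Given Lemma~\ref{lemma: cs and cs prime}, no further estimates or case analysis are needed, and the statement follows directly.
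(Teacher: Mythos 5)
Your proposal is correct and follows essentially the same route as the paper's proof: pass to the deleted data $(\cs', h^{(k)})$, pick any $y \in \cw(\cs', h^{(k)})$ (non-empty by Theorem~\ref{thm.Sommers-Tymoczko}), set $w = yu_k$, and invoke Lemma~\ref{lemma: cs and cs prime} together with the computation $w(k) = y(u_k(k)) = y(1) = 1$. The paper's version is just a terser rendering of the same argument.
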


\begin{proof}
Since $\cw(\cs', h^{(k)})$ is non-empty, there exists 
$y\in \cw(\cs', h^{(k)}) \subseteq \left< s_2, \ldots, s_{n-1} \right>$.  Consider $w=yu_k$.  
By Lemma~\ref{lemma: cs and cs prime}, $w \in \cw(\cs, h)$, and by construction $w(k) =1$.  
\end{proof}

 The relationship is even tighter between the largest source and the maximal element $w_\cs$ of $\cw(\cs,h)$. 

\begin{Lemma} \label{cor.source} 
Let $h:[n] \to [n]$ be a Hessenberg function and $\cs\in \cw_h$.  Then the vertex $k$ is the largest source of $\Gamma_h$ with respect to $o_h(\cs)$ if and only if $w_\cs^{-1}(1)=k$.
\end{Lemma}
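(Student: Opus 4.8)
The plan is to prove the single implication that if $k$ is the largest source of $\Gamma_h$ with respect to $o_h(\cs)$, then $w_\cs^{-1}(1)=k$; the reverse implication then comes for free. Indeed, a largest source always exists (any acyclic orientation has a source, and there are only finitely many vertices), so if we assume $w_\cs^{-1}(1)=k$ and let $k'$ denote the largest source, the forward implication gives $w_\cs^{-1}(1)=k'$, whence $k=k'$ and $k$ is the largest source. Thus it suffices to treat the forward direction, and the heart of the matter is to show that the value $1$ occupies the largest source in the one-line notation of the maximal element $w_\cs$.

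The key tool I would first isolate is a \emph{maximality characterization} of $w_\cs$: whenever $a$ precedes $a+1$ in the one-line notation of $w_\cs$ (i.e.\ $w_\cs^{-1}(a)<w_\cs^{-1}(a+1)$), one has $w_\cs^{-1}(a+1)\le h\bigl(w_\cs^{-1}(a)\bigr)$. To prove this, write $p_a=w_\cs^{-1}(a)$ and $p_{a+1}=w_\cs^{-1}(a+1)$ and suppose instead that $p_a<p_{a+1}$ but $p_{a+1}>h(p_a)$. Left multiplication by the simple reflection $s_a$ swaps the values $a$ and $a+1$; since $a$ precedes $a+1$, this adds exactly one inversion, namely the root $t_{p_a}-t_{p_{a+1}}$, so that $N(s_aw_\cs)=N(w_\cs)\sqcup\{t_{p_a}-t_{p_{a+1}}\}$. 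Because $p_{a+1}>h(p_a)$, this new root lies outside $\Phi_h^+$, so $N(s_aw_\cs)\cap\Phi_h^+=N(w_\cs)\cap\Phi_h^+=\cs$ and hence $s_aw_\cs\in\cw(\cs,h)$. But $N(w_\cs)\subsetneq N(s_aw_\cs)$ forces $w_\cs<_L s_aw_\cs$, contradicting the maximality of $w_\cs$ in the interval $[z_\cs,w_\cs]_L$ from Lemma~\ref{lemma.interval}.

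Granting this, here is how I would finish the forward direction. Set $v:=w_\cs(k)$. Since $k$ is the largest source, Lemma~\ref{lemma.largest.source} says every vertex $i>k$ is reachable from $k$, so Lemma~\ref{lemma: reachable and w} gives $w_\cs(i)>v$ for all $i>k$. Now suppose for contradiction that $v>1$, and locate the value $v-1$, say at position $q=w_\cs^{-1}(v-1)\neq k$. If $q>k$, then $w_\cs(q)>v$ by the previous sentence, contradicting $w_\cs(q)=v-1<v$. If $q<k$, then $v-1$ precedes $v$, so the maximality characterization yields $k=w_\cs^{-1}(v)\le h(q)$; thus $\{q,k\}\in E(\Gamma_h)$, and since $t_q-t_k\notin N(w_\cs)$ (as $w_\cs(q)=v-1<v=w_\cs(k)$), the defining rule~\eqref{eq: def orientation} orients this edge as $q\longrightarrow k$, an edge pointing \emph{into} $k$. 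This contradicts $k$ being a source. Hence $v=1$, i.e.\ $w_\cs^{-1}(1)=k$, as desired.

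The main obstacle is precisely the case $q<k$ in the last paragraph: for vertices $i<k$ that are \emph{not} adjacent to $k$ in $\Gamma_h$, neither the source condition nor reachability constrains $w_\cs(i)$ against $w_\cs(k)$, so one genuinely needs an additional input to force $w_\cs(i)>w_\cs(k)$. The maximality characterization supplies exactly this input by converting the situation ``$a$ precedes $a+1$ with their positions too far apart to give a root of $\Phi_h^+$'' into an explicit strictly larger element $s_aw_\cs\in\cw(\cs,h)$. Everything else is a direct application of the reachability lemmas, together with the standard fact that $u\le_L v$ if and only if $N(u)\subseteq N(v)$.
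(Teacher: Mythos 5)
Your argument is correct, and it takes a genuinely different route from the paper's. The paper proves the forward direction by combining three ingredients: Lemma~\ref{lemma.source2} (built on the inductive $u_k$, $h^{(k)}$, $\cs'$ machinery of Lemma~\ref{lemma: cs and cs prime}) to produce \emph{some} $w\in\cw(\cs,h)$ with $w^{-1}(1)=k$; Lemma~\ref{lemma.source1} to see that $j:=w_\cs^{-1}(1)$ is itself a source, giving $j\le k$; and the order-preserving projection onto the minimal coset representatives $\{u_1,\dots,u_n\}$ of $\langle s_2,\dots,s_n\rangle$ applied to $w\le w_\cs$, giving $k\le j$. You instead bypass Lemmas~\ref{lemma.source1} and~\ref{lemma.source2} and the coset-representative argument entirely, replacing them with a local ``exchange'' characterization of the maximality of $w_\cs$: if the values $a$ and $a+1$ sit at positions $p_a<p_{a+1}$ with $p_{a+1}>h(p_a)$, then $s_aw_\cs$ lies in $\cw(\cs,h)$ and is strictly larger in weak order, a contradiction. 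That criterion is proved correctly (the added inversion $t_{p_a}-t_{p_{a+1}}$ lies outside $\Phi_h^+$, so the Weyl-type subset is unchanged), and your case analysis on the position $q$ of the value $v-1$ correctly closes the argument, using Lemmas~\ref{lemma.largest.source} and~\ref{lemma: reachable and w} for $q>k$ and the exchange criterion plus the orientation rule~\eqref{eq: def orientation} for $q<k$. Your reduction of the converse to the forward direction via uniqueness of the largest source is also valid and is cleaner than the paper's ``by similar reasoning.'' What your approach buys is self-containedness — no parabolic decompositions and no appeal to the inductive structure — at the cost of proving an auxiliary maximality criterion that the paper does not state; what the paper's approach buys is reuse of machinery (Lemmas~\ref{lemma: cs and cs prime}--\ref{lemma.source2}) that it needs anyway for Proposition~\ref{prop.reachability}.
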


\begin{proof} Suppose $k$ is the largest source of $\Gamma_h$ with respect to $o_h(\cs)$.  By Lemma~\ref{lemma.source2} there exists $w\in \cw(\cs,h)$ such that $w(1)=k$ so we may write $w = yu_k$ for some $y\in \left< s_2, \ldots, s_{n-1}\right>$.  
Let $j :=w_\cs^{-1}(1)$ or equivalently, $w_\cs(j)=1$. Then by Lemma~\ref{lemma.source1} we know $j$ must be a source, 
and since $k$ is the largest source, we conclude $j \leq k$. To complete the argument that $j=w_\cs^{-1}(1) = k$ we show that $k \leq j$. 
To see this, write $w_\cs = y_\cs u_j$ for some $y_\cs\in \left< s_2, \ldots,s_n \right>$, which is possible since $j=w_\cs^{-1}(1)$.  
We know that $w_\cs$ is the maximal element of $\cw(\cs,h)$, so 
\[
w\leq w_\cs \Rightarrow y u_k \leq y_\cs u_j \Rightarrow u_k\leq u_j \Rightarrow k\leq j
\]
where the second implication follows from the fact that the map from $\Symm_n$ to the shortest coset representatives $\{u_1 :=e, u_2, \cdots, u_n\}$ of the subgroup $\langle s_2, s_3, \cdots, s_n \rangle$ is order-preserving \cite[Prop. 2.5.1]{BjoBre05}. 
Thus $j=w_\cs^{-1}(1)=k$ as desired. The converse follows by similar reasoning.
\end{proof}

The lemma and other inductive structure established above yield a simple way to compute $w_\cs$ given the corresponding acyclic orientation. 

\begin{Example}\label{ex.interval} Let $n=4$.  We consider as in Example~\ref{ex.acyclic} the orientation $o_{(3,4,4,4)}(\{t_2-t_3, t_1-t_3 \})$ of graph $\Gamma_{h}$ for $h=(3,4,4,4)$.  Using the graph pictured in that example, Lemma~\ref{cor.source} tells us that $w_\cs(3)=1$.  Deleting $3$ and all adjacent edges tells us that $w_\cs(1)=2$.  Continuing in this way gives the sequence of graphs:
\vspace*{.15in}
\[\xymatrix{1 \ar[r]  & 2 \ar@/^1.5pc/[rr] & 3  \ar@/_1.5pc/[ll] \ar[l]\ar[r]  & 4  
&
1 \ar[r]  & 2 \ar[r] & 4 
&
2\ar[r]&4
&
4
}\]
and $w_\cs = [2,3,1,4].$
\end{Example}

We can now prove one of our important technical results, which characterizes reachability in terms of the maximal elements $w_\cs$ for $\cs \in \cw_h$.  This reformulation is the tool which allows us to prove our characterization of $T$-fixed points in the opposite Hessenberg Schubert variety in the next section. 

\begin{Proposition}\label{prop.reachability} Let $h: [n] \to [n]$ be a Hessenberg function and $\cs\in \cw_h$.  Suppose $j\leq i$.  Then $i$ is reachable from $j$ with respect to $\cs$ if and only if $w_\cs(j) \leq w_\cs(i)$. 
\end{Proposition}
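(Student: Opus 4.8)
The plan is to prove the two implications separately, with essentially all of the work concentrated in the converse. The forward direction is immediate: if $i$ is reachable from $j$ with respect to $o_h(\cs)$, then since $w_\cs\in\cw(\cs,h)$, Lemma~\ref{lemma: reachable and w} applies directly and yields $w_\cs(j)\leq w_\cs(i)$. For the converse I would argue by induction on $n$, the base case $n=1$ being trivial.

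For the inductive step, set $k:=w_\cs^{-1}(1)$, which by Lemma~\ref{cor.source} is precisely the largest source of $\Gamma_h$ with respect to $o_h(\cs)$. First I would dispose of the case $j=k$: here $w_\cs(j)=1\leq w_\cs(i)$ holds automatically, and I need only show $i$ is reachable from $k$, which is trivial if $i=k$ and is exactly Lemma~\ref{lemma.largest.source} if $i>k$. Now suppose $j\neq k$. Since the hypothesis gives $w_\cs(i)\geq w_\cs(j)\geq 2$, we also have $i\neq k$, so both $i$ and $j$ survive as vertices of the smaller graph $\Gamma_{h^{(k)}}$ obtained by deleting $k$.

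The crux is to transfer the statement to $\Gamma_{h^{(k)}}$ and invoke the inductive hypothesis. Two observations make this work. First, because $k$ is a source, no oriented edge points \emph{into} $k$, so $k$ can never be an interior or terminal vertex of a reachability path; hence, using that the relabelling $v\mapsto u_k(v)$ is order-preserving on $[n]\setminus\{k\}$ and that, by~\eqref{equation: induction} together with $\cs'=u_k(\cs)\cap\Psi$, it carries $o_h(\cs)$ to $o_{h^{(k)}}(\cs')$, the vertex $i$ is reachable from $j$ in $\Gamma_h$ if and only if $u_k(i)$ is reachable from $u_k(j)$ in $\Gamma_{h^{(k)}}$ with respect to $\cs'$. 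Second, writing $w_\cs=y_\cs u_k$ with $y_\cs\in\langle s_2,\ldots,s_{n-1}\rangle$, I claim $y_\cs$ is the maximal element $w_{\cs'}$ of $\cw(\cs',h^{(k)})$. Granting the claim, the identity $w_\cs(j)=(y_\cs u_k)(j)=w_{\cs'}(u_k(j))$ (and likewise for $i$) shows the hypothesis $w_\cs(j)\leq w_\cs(i)$ is equivalent to $w_{\cs'}(u_k(j))\leq w_{\cs'}(u_k(i))$, and the inductive hypothesis applied to $h^{(k)}$ then delivers the desired reachability.

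It remains to prove $y_\cs=w_{\cs'}$, which I expect to be the main obstacle. By Lemma~\ref{lemma: cs and cs prime} we have $y_\cs\in\cw(\cs',h^{(k)})$, so $y_\cs\leq_L w_{\cs'}$; I must establish the reverse inequality. Applying Lemma~\ref{lemma: cs and cs prime} to $w_{\cs'}$ shows $w_{\cs'}u_k\in\cw(\cs,h)$, whence $w_{\cs'}u_k\leq_L w_\cs=y_\cs u_k$ by maximality of $w_\cs$. Using the length-additive factorization $N(yu_k)=N(u_k)\sqcup u_k^{-1}N(y)$ together with the characterization $u\leq_L v\Leftrightarrow N(u)\subseteq N(v)$, the inequality $w_{\cs'}u_k\leq_L y_\cs u_k$ reduces, after cancelling the common disjoint summand $N(u_k)$ and applying $u_k$, to $N(w_{\cs'})\subseteq N(y_\cs)$, i.e.\ $w_{\cs'}\leq_L y_\cs$. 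Combining the two inequalities gives $y_\cs=w_{\cs'}$. The delicate point throughout is keeping three relabellings consistent: the order-preserving identification $u_k$ of vertex sets, the induced correspondence $\cs\leftrightarrow\cs'$ of Weyl-type subsets, and the compatibility of the maximal elements $w_\cs$ and $w_{\cs'}$ under right multiplication by $u_k$.
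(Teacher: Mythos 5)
Your proof is correct and follows essentially the same route as the paper's: the forward direction via Lemma~\ref{lemma: reachable and w}, and the converse by induction on $n$, deleting the largest source $k=w_\cs^{-1}(1)$ (Lemma~\ref{cor.source}), showing $y_\cs$ is maximal in $\cw(\cs',h^{(k)})$ via Lemma~\ref{lemma: cs and cs prime}, and invoking the inductive hypothesis on $\Gamma_{h^{(k)}}$. The only cosmetic difference is that the paper runs the induction on the contrapositive statement, whereas you argue directly and supply slightly more detail on why reachability transfers between $\Gamma_h$ and $\Gamma_{h^{(k)}}$ and on the weak-order cancellation of $u_k$.
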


\begin{proof} 
First suppose that $j \leq i$ and $i$ is reachable from $j$. We wish to show that $w_\cs(j) \leq w_\cs(i)$. This follows immediately from Lemma~\ref{lemma: reachable and w}. 

So now suppose $j \leq i$ and that $w_\cs(j) \leq w_\cs(i)$. We need to show that $i$ is reachable from $j$. We proceed to prove the contrapositive statement by an induction on $n$. When $n=1$, we have $j=i=1$ and $w_\cs=e$ and the claim is trivial.
Now suppose $n\geq 2$ and that the claim is true for $n-1$. Note that we may assume $j \neq i$ since if $j=i$ then $i$ is reachable from $j$ by convention and the claim is immediate. So suppose $j < i$ and additionally suppose that $i$ is not reachable from $j$.
Let $k :=w_\cs^{-1}(1)$ and write $w_\cs = y_\cs u_k$ for some $y_\cs \in \left<s_2, \ldots, s_{n-1} \right>$. If $i=k$, 
then our claim follows immediately since $w_\cs(k)=w_\cs(i) = 1<w_\cs(j)$. 
Next, notice that by Corollary~\ref{cor.source}, $k$ is the largest source of $\Gamma_h$ with respect to $o_h(\cs)$. 
If $j=k$, then by Lemma~\ref{lemma.largest.source} we would have that $i$ is reachable from $j$, but we have assumed that $i$ is not reachable from $j$, so we conclude $j \neq k$. 

We now have that $i \neq k, j \neq k$ and $j < i$. 
Consider the graph $\Gamma_{h^{(k)}}$ obtained from $\Gamma_h$ with acyclic orientation $o_{h^{(k)}}(\cs')$ induced by $o_h(\cs)$. 
By Lemma~\ref{lemma: cs and cs prime}, since $w_\cs \in \cw(\cs, h)$ we know $y_\cs \in \cw(\cs', h^{(k)})$. We wish to show $y_\cs$ is the maximal element in $\cw(\cs', h^{(k)})$. To see this, suppose $z_\cs$ is the maximal element. First observe $y_\cs \leq z_\cs$ since $z_\cs$ is maximal. On the other hand, $z_\cs u_k \in \cw(\cs, h)$ by Lemma~\ref{lemma: cs and cs prime} and since $w_\cs$ is maximal, we conclude $z_\cs u_k \leq w_\cs = y_\cs u_k$. Now the properties of Bruhat order imply $z_\cs \leq y_\cs$. Hence $y_\cs = z_\cs$ and $y_\cs$ is maximal in $\cw(\cs', h^{(k)})$.  
Our assumptions imply that $i'=u_k(i)$ is not reachable from $j'=u_k(j)$.  As $\Gamma_{h^{(k)}}$ is a Hessenberg graph on $n-1$ vertices and $y_\cs$ is maximal, we know $y_\cs(i')<y_\cs(j')$ by induction. Since $w_\cs = y_\cs u_k$, we conclude $w_\cs(i)<w_\cs (j)$ as desired.
\end{proof}

As a first geometric application of the results in this section, we prove the following. Recall that if $v \in \cw(\cs,h)$ then $v \leq_L w_\cs$ so 
there exists (by definition of weak Bruhat order) an element $u \in \Symm_n$ such that $w_\cs = u^{-1}v$ with $\ell(w_\cs) = \ell(v)+\ell(u)$. 

\begin{Proposition}\label{prop.cell-translation} 
Suppose $\cs\in \cw_h$ and $v\in \cw(\cs,h)$ and let $w_\cs = u^{-1}v$ for $u\in \Symm_n$ 
where $\ell(w_\cs) = \ell(v)+\ell(u)$. Then
\[
\Omega_{v,h}^\circ = u \left( \Omega_{w_\cs}^\circ \cap \Hess(u^{-1}\mathsf{S}u, h)\right).
\]
\end{Proposition}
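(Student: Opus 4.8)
The plan is to prove the equality in two stages: first reduce the right-hand side to an intersection taken inside the single variety $\Hess(\mathsf{S},h)$ using equivariance, and then identify the two locally closed sets by a closed-immersion-plus-dimension argument, thereby avoiding any explicit solution of the Hessenberg incidence equations. First I would record the standard $GL_n(\C)$-equivariance of Hessenberg varieties: for any $g_0\in GL_n(\C)$ the substitution $g'=g_0g$ in~\eqref{eq: definition Hess X h} gives $g_0\,\Hess(\mathsf{X},h)=\Hess(g_0\mathsf{X}g_0^{-1},h)$. Taking $g_0=u$ and $\mathsf{X}=u^{-1}\mathsf{S}u$ yields $u\,\Hess(u^{-1}\mathsf{S}u,h)=\Hess(\mathsf{S},h)$, so since $u$ acts bijectively,
\[
u\bigl(\Omega_{w_\cs}^\circ\cap\Hess(u^{-1}\mathsf{S}u,h)\bigr)=u\Omega_{w_\cs}^\circ\cap\Hess(\mathsf{S},h)=:Z .
\]
It then suffices to prove $Z=\Omega_{v,h}^\circ=\Omega_v^\circ\cap\Hess(\mathsf{S},h)$.

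The crucial structural input is that $u\Omega_{w_\cs}^\circ$ sits inside $\Omega_v^\circ$ as a \emph{closed} subcell. Since $w_\cs$ is maximal in $\cw(\cs,h)$ we have $v\leq_L w_\cs$ (Lemma~\ref{lemma.interval}), and the length-additive factorization $w_\cs=u^{-1}v$ gives $N(w_\cs)=N(v)\sqcup v^{-1}N(u^{-1})$ (cf.\ the computation in the proof of Lemma~\ref{lemma: cs and cs prime}); in particular $v^{-1}N(u^{-1})\subseteq\Phi^+$. I would then factor the conjugated Borel as $uB_-u^{-1}=C\cdot U_{N(u^{-1})}$, where $U_{N(u^{-1})}=\prod_{\beta\in N(u^{-1})}U_\beta$ is the product of the positive root subgroups indexed by $N(u^{-1})=u\Phi^-\cap\Phi^+$ and $C:=T\,U_{u\Phi^-\cap\Phi^-}\subseteq B_-$. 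For each $\beta\in N(u^{-1})$ the root $v^{-1}\beta$ is positive, so $v^{-1}U_\beta v=U_{v^{-1}\beta}\subseteq B$, whence $U_\beta$ fixes the point $vB$. Using $uw_\cs=v$ this gives
\[
u\Omega_{w_\cs}^\circ=uB_-w_\cs B/B=(uB_-u^{-1})\,vB/B=C\,vB/B\subseteq B_-vB/B=\Omega_v^\circ .
\]

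To see that this inclusion is closed I would pass to the affine coordinates on $\Omega_v^\circ$ given by the canonical matrix representatives, whose free entries are indexed by the non-inversions of $v$. In these coordinates $C\,vB/B$ is precisely the coordinate subspace cut out by setting the entries indexed by $v^{-1}N(u^{-1})=N(w_\cs)\setminus N(v)$ equal to zero; being a linear subspace, it is closed in $\Omega_v^\circ$. Consequently $Z=u\Omega_{w_\cs}^\circ\cap\Hess(\mathsf{S},h)$ is a closed subset of $\Omega_v^\circ\cap\Hess(\mathsf{S},h)=\Omega_{v,h}^\circ$. Finally I would finish by a dimension count. Because $u$ is a permutation matrix and $\mathsf{S}$ is diagonal with distinct eigenvalues, $u^{-1}\mathsf{S}u$ is again such a matrix, so $\Hess(u^{-1}\mathsf{S}u,h)$ is regular semisimple and~\eqref{eqn.decomposition} applies to it. Since $v,w_\cs\in\cw(\cs,h)$ we have $\ell_h(v)=|N(v)\cap\Phi_h^+|=|\cs|=|N(w_\cs)\cap\Phi_h^+|=\ell_h(w_\cs)$, so $\Omega_{v,h}^\circ\cong\C^{N_h-|\cs|}$ is irreducible of dimension $N_h-|\cs|$, while $Z$, being the image under the isomorphism $u$ of $\Omega_{w_\cs}^\circ\cap\Hess(u^{-1}\mathsf{S}u,h)\cong\C^{N_h-\ell_h(w_\cs)}$, is irreducible of the same dimension $N_h-|\cs|$. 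As $vB=uw_\cs B\in Z$, the set $Z$ is a nonempty closed irreducible subvariety of the irreducible variety $\Omega_{v,h}^\circ$ of full dimension, forcing $Z=\Omega_{v,h}^\circ$.

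I expect the main obstacle to be the middle step, namely establishing simultaneously that $u\Omega_{w_\cs}^\circ$ lies in $\Omega_v^\circ$ \emph{and} does so as a closed (linear) subcell: the inclusion follows cleanly from the factorization of $uB_-u^{-1}$ together with $v^{-1}N(u^{-1})\subseteq\Phi^+$, but identifying $C\,vB/B$ with a coordinate subspace of the affine chart on $\Omega_v^\circ$ requires a careful bookkeeping of root subgroups and canonical representatives. It is precisely this closedness that makes the dimension argument applicable and lets us conclude $Z=\Omega_{v,h}^\circ$ without ever solving the Hessenberg equations inside the two cells explicitly.
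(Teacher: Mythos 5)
Your proof is correct, but it reaches the conclusion by a genuinely different route from the paper's for the harder inclusion. The easy inclusion $u\Omega_{w_\cs}^\circ\subseteq\Omega_v^\circ$ (hence $u(\Omega_{w_\cs}^\circ\cap\Hess(u^{-1}\mathsf{S}u,h))\subseteq\Omega_{v,h}^\circ$) is the same in both arguments: it comes from $N(v)\subseteq N(w_\cs)$ and the canonical-form description of opposite Schubert cells. For the reverse inclusion, the paper works pointwise in matrix coordinates: it invokes \cite[Corollary 3.9]{Cho-Hong-Lee1}, which says that a point $vbB\in\Omega_{v,h}^\circ$ has $b_{ij}=0$ whenever $i$ is not reachable from $j$, and then applies Proposition~\ref{prop.reachability} to translate non-reachability into $w_\cs(i)<w_\cs(j)$, concluding $w_\cs bB\in\Omega_{w_\cs}^\circ$. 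You instead observe that $u\Omega_{w_\cs}^\circ$ is a \emph{closed} coordinate subspace of $\Omega_v^\circ$ (cut out by the vanishing of the entries indexed by $N(w_\cs)\setminus N(v)$), so that $Z=u\Omega_{w_\cs}^\circ\cap\Hess(\mathsf{S},h)$ is a closed irreducible subvariety of the irreducible affine cell $\Omega_{v,h}^\circ$, and then you finish with the dimension count $\ell_h(v)=|\cs|=\ell_h(w_\cs)$ coming from $v,w_\cs\in\cw(\cs,h)$ together with the cell structure~\eqref{eqn.decomposition} applied to the regular semisimple operator $u^{-1}\mathsf{S}u$. Both arguments are sound. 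The paper's version gives explicit coordinate information about which entries vanish on the Hessenberg cell and showcases the reachability machinery that is the main theme of the note; yours bypasses \cite[Corollary 3.9]{Cho-Hong-Lee1} and Proposition~\ref{prop.reachability} entirely, at the cost of leaning on the (already-stated but nontrivial) fact that the cells $\Omega_{w,h}^\circ$ are affine spaces of dimension $N_h-\ell_h(w)$, and it yields only the set-theoretic equality rather than the explicit description of points. One small point worth making explicit if you write this up: the equality $\Omega_{w_\cs}^\circ\cap\Hess(u^{-1}\mathsf{S}u,h)\cong\C^{N_h-\ell_h(w_\cs)}$ requires noting, as you do, that the statement~\eqref{eqn.decomposition} holds for any diagonal matrix with distinct eigenvalues, not just the fixed $\mathsf{S}$.
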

\begin{proof} Let $b$ denote a lower triangular matrix with $1$'s on the diagonal.  Recall that $wbB\in \Omega_w^\circ$ if and only if $b_{ij}=0$ for all $i>j$ such that $w(i)<w(j)$, or equivalently, $b_{ij}=0$ for all $t_j-t_i \in N(w)$. Our assumptions imply that $v\leq_L w_\cs$ so $N(v) \subseteq N(w_\cs)$.

Suppose $w_\cs bB\in \Omega_{w_\cs}^\circ$, so $b_{ij}=0$ for all $t_j-t_i\in N(w_\cs)$ implying that $b_{ij}=0$ for all $t_j-t_i\in N(v)$ also.  Thus $uw_\cs bB = vbB\in \Omega_v^\circ$.  We now have
\begin{eqnarray*}
u\Omega_{w_\cs}^\circ \subseteq \Omega_v^\circ &\Rightarrow&  u\Omega_{w_\cs}^\circ \cap \Hess(\mathsf{S},h) \subseteq \Omega_{v}^\circ\cap \Hess(\mathsf{S},h)\\
&\Rightarrow& u \left( \Omega_{w_\cs}^\circ \cap \Hess(u^{-1}\mathsf{S}u, h)\right) \subseteq \Omega_{v,h}^\circ.
\end{eqnarray*}
Here the second implication uses the fact that $\Hess(\mathsf{S}, h) = u\Hess(u^{-1}\mathsf{S}u, h)$.

To prove the opposite inclusion, suppose $vbB\in \Omega_{v,h}^\circ$. By \cite[Corollary 3.9]{Cho-Hong-Lee1}, $b_{ij}=0$ for all $i>j$ such that $i$ is not reachable from $j$.  By Proposition~\ref{prop.reachability}, this implies $b_{ij}=0$ for all $i>j$ such that $w_\cs(i)<w_\cs(j)$ so $w_\cs bB \in \Omega_{w_\cs}^\circ$.
Furthermore, since $vbB\in \Hess(\mathsf{S},h)$ it follows immediately that $u^{-1}vbB = w_\cs bB\in \Hess(u^{-1}\mathsf{S}u,h)$.  Thus $u^{-1}v b B \in \Omega_{w_\cs}^\circ \cap \Hess(u^{-1}\mathsf{S}u, h)$, as desired.
\end{proof}

%==================
\section{Connection to Bruhat Order}
%==================

In this section we state and prove our main result, Theorem~\ref{theorem: main}.
To do so, we need some terminology and notation from the work of Cho, Hong, and Lee \cite{Cho-Hong-Lee1}.

Let $n$ be a positive integer and $1 \leq k \leq n$.  Denote by $I_{k,n}$ the set of $k$-tuples of positive integers $\underline{i} = (i_1, i_2, \cdots, i_k) \in \Z^k$ satisfying $1 \leq i_1 < i_2 < \cdots < i_k \leq n$. Suppose we are given a permutation $w \in S_n$ and $\underline{i} \in I_{k,n}$. We can consider the $k$-tuple of not-necessarily-strictly-increasing integers $(w(i_1), w(i_2), \cdots, w(i_k)) \in \Z^k$ and then re-order the entries in such a way that they are strictly increasing;  we denote the result as $w \cdot \underline{i}$, and by construction, we have $w \cdot \underline{i} \in I_{k,n}$.

We also need a certain subset $J_{w,h,k}$ of $I_{k,n}$ associated to a permutation $w \in S_n$, a Hessenberg function $h: [n] \to [n]$, and an integer $k$ such that $1 \leq k \leq n$.   
To define it, we need the following terminology from \cite{Cho-Hong-Lee1}, which extends the notion of reachability (described in Section~\ref{sec.reachability}) to two subsets of $[n]$.

\begin{Definition} 
Let $A = \{1 \leq a_1 < a_2 < \cdots < a_r \leq n\}$ and $B = \{1 \leq b_1 < b_2 < \cdots < b_r \leq n\}$ be two subsets of $[n]$ of the same cardinality $r$. We say $A$ is \textbf{reachable} from $B$ if there exists a permutation $\sigma \in S_r$ such that $a_{\sigma(i)}$ is reachable from $b_i$ for all $1 \leq i \leq r$. 
\end{Definition} 

We now define $J_{w,h,k}$ as in  \cite[Equation (3.2)]{Cho-Hong-Lee1} in terms of reachability with respect to the acyclic orientation $o_h(\cs)$ determined by $\cs=N(w)\cap \Phi_h^+$: 
\begin{equation}\label{eq: J w h j} 
J_{w,h,k} := \{ \underline{i} = (i_1, i_2, \cdots, i_k) \in I_{k,n}  \mid  \{i_1, i_2 \cdots, i_k\} \,  \textup{ is reachable from } \,  \{1,2,\cdots, k\} \}.
\end{equation}

With this notation in place, we can state the following result of Cho, Hong, and Lee \cite[Theorem 3.5]{Cho-Hong-Lee1}.

\begin{Theorem}[Cho--Hong--Lee] \label{thm.CHL}
Let $h: [n] \to [n]$ be a Hessenberg function and $w\in \Symm_n$.  Then 
\[
(\Omega_{w,h})^T = \{ u\in \Symm_n \mid u\cdot (1, \ldots, k) \in \{ w\cdot(i_1, \ldots, i_k) \mid (i_1, \ldots, i_k) \in J_{w,k,h}   \} \textup{ for all } 1\leq k \leq  n-1  \}.
\]
\end{Theorem}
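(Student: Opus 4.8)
The plan is to reduce the statement to a computation of Plücker supports and then to realize each candidate fixed point as an explicit torus limit. Embed $GL_n(\C)/B$ in the product of Grassmannians $\prod_{k=1}^{n-1}\mathrm{Gr}(k,n)$, and for a flag $gB$ write $p^{(k)}_{\underline{j}}(gB)$, $\underline{j}\in I_{k,n}$, for the Plücker coordinates of its $k$-th subspace. Since $\Omega_{w,h}$ is $T$-invariant we have $(\Omega_{w,h})^T\subseteq (GL_n(\C)/B)^T=\Symm_n$, and a $T$-fixed point $uB$ has exactly one nonzero Plücker coordinate in each degree $k$, namely the one indexed by $u\cdot(1,\ldots,k)$. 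The Plücker coordinates are continuous and $\Omega_{w,h}=\overline{\Omega_{w,h}^\circ}$, so any coordinate $p^{(k)}_{\underline{j}}$ vanishing identically on the cell $\Omega_{w,h}^\circ$ also vanishes on its closure. Writing $\mathrm{Supp}_k:=\{\underline{j}\in I_{k,n}\mid p^{(k)}_{\underline{j}}\not\equiv 0 \textup{ on }\Omega_{w,h}^\circ\}$, this shows immediately that $uB\in\Omega_{w,h}$ forces $u\cdot(1,\ldots,k)\in\mathrm{Supp}_k$ for all $k$; this is the easy inclusion. It therefore remains to identify $\mathrm{Supp}_k$ and to prove the converse.

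The central step is the identification $\mathrm{Supp}_k=\{w\cdot\underline{i}\mid\underline{i}\in J_{w,h,k}\}$. By the explicit description of the opposite Hessenberg Schubert cell (the parametrization of $\Omega_w^\circ$ together with \cite[Corollary 3.9]{Cho-Hong-Lee1}), every point of $\Omega_{w,h}^\circ$ has the form $wbB$ where $b$ is lower unitriangular and the off-diagonal entry $b_{i\ell}$ (with $i>\ell$) is a free parameter when $i$ is reachable from $\ell$ with respect to $o_h(\cs)$, $\cs=N(w)\cap\Phi_h^+$, and is $0$ otherwise. The $(r,\ell)$-entry of $wb$ is then $b_{w^{-1}(r),\ell}$, so the coordinate $p^{(k)}_{\underline{j}}$ of the span of the first $k$ columns is, up to sign, $\det\big(b_{w^{-1}(j_r),\,c}\big)_{1\le r,c\le k}$. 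Expanding this determinant over permutations $\tau$ of $\{1,\ldots,k\}$, a term $\prod_r b_{w^{-1}(j_r),\tau(r)}$ is nonzero exactly when each $a_r:=w^{-1}(j_r)$ is matched to a column $\tau(r)\le a_r$ with $a_r$ reachable from $\tau(r)$; writing $\underline{i}$ for the increasing rearrangement of $w^{-1}(\underline{j})$ (so $\underline{j}=w\cdot\underline{i}$), the existence of such a $\tau$ is precisely the condition that $\{i_1,\ldots,i_k\}$ be reachable from $\{1,\ldots,k\}$, i.e. $\underline{i}\in J_{w,h,k}$. Crucially, distinct permutations $\tau$ yield distinct monomials, because each first index $w^{-1}(j_r)$ occurs in exactly one factor and the second index it carries recovers $\tau(r)$; hence no cancellation can occur and $p^{(k)}_{\underline{j}}\not\equiv 0$ if and only if some valid $\tau$ exists. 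This yields the claimed description of $\mathrm{Supp}_k$, and combined with the previous paragraph gives the forward inclusion of the theorem.

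For the converse, suppose $u\in\Symm_n$ satisfies $u\cdot(1,\ldots,k)\in\{w\cdot\underline{i}\mid\underline{i}\in J_{w,h,k}\}=\mathrm{Supp}_k$ for all $1\le k\le n-1$. I would take a \emph{generic} point $x=wbB\in\Omega_{w,h}^\circ$, that is, one whose free entries $b_{i\ell}$ are algebraically independent; then every Plücker coordinate that is not identically zero on the cell is nonzero at $x$, so the degree-$k$ support of $x$ equals $\mathrm{Supp}_k$ and in particular contains $u\cdot(1,\ldots,k)$. Let $\lambda:\C^\times\to T$ be the one-parameter subgroup $\lambda(t)=\mathrm{diag}(t^{\mu_1},\ldots,t^{\mu_n})$ with $\mu_{u(1)}<\mu_{u(2)}<\cdots<\mu_{u(n)}$ (e.g. $\mu_{u(i)}=i$). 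As $t\to0$, the $k$-th subspace of $\lambda(t)\cdot x$ limits to the coordinate subspace indexed by the element of the degree-$k$ support of $x$ minimizing $\underline{j}\mapsto\sum_r\mu_{j_r}$, provided the minimizer is unique. But $u\cdot(1,\ldots,k)=\{u(1),\ldots,u(k)\}$ consists of the $k$ indices of smallest $\mu$-weight, so it is the unique minimizer among all $k$-subsets, hence the unique minimizer within the support of $x$. Thus $\lim_{t\to0}\lambda(t)\cdot x$ has $k$-th subspace $\mathrm{span}(e_{u(1)},\ldots,e_{u(k)})$ for every $k$; that is, the limit equals $uB$. Since $\Omega_w^\circ=B_-wB/B$ and $\Hess(\mathsf{S},h)$ are both stable under the diagonal torus, the cell $\Omega_{w,h}^\circ$ is $T$-invariant, so $\lambda(\C^\times)\cdot x\subseteq\Omega_{w,h}^\circ$ and its limit $uB$ lies in $\Omega_{w,h}$. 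Hence $uB\in(\Omega_{w,h})^T$, giving the reverse inclusion.

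The main obstacle is the middle paragraph: pinning down $\mathrm{Supp}_k$ requires both recognizing that the surviving terms of the minor correspond exactly to the set-reachability defining $J_{w,h,k}$ and verifying the no-cancellation point that distinct matchings give distinct monomials. Once the support is computed, the degeneration in the last paragraph is essentially forced, the only care being the choice of a single regular one-parameter subgroup that simultaneously selects $u\cdot(1,\ldots,k)$ in every degree $k$.
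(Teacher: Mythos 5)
First, a point of comparison: the paper does not prove this statement --- it is imported verbatim from Cho--Hong--Lee \cite[Theorem 3.5]{Cho-Hong-Lee1} --- so your attempt can only be judged on its own terms. Your overall architecture is reasonable: the easy inclusion via vanishing of Pl\"ucker coordinates on closures is correct, and the torus degeneration in the last paragraph (using $\lambda(t)=\mathrm{diag}(t^{\mu_1},\dots,t^{\mu_n})$ with $\mu_{u(i)}=i$, so that $u\cdot(1,\dots,k)$ is the unique weight-minimizing $k$-subset) is also sound. The genuine gap is in the central step. You assert that $\Omega_{w,h}^\circ$ consists of all $wbB$ with $b_{i\ell}$ a \emph{free parameter} whenever $i$ is reachable from $\ell$ and $0$ otherwise. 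That is false: \cite[Corollary 3.9]{Cho-Hong-Lee1} (as invoked in the proof of Proposition~\ref{prop.cell-translation}) gives only the necessary vanishing condition, and the reachable entries are constrained by the Hessenberg equations. A dimension count already rules out your parametrization: reachability is transitive, so the number of reachable pairs generally exceeds $\dim\Omega_{w,h}^\circ = N_h-\ell_h(w)=|\Phi_h^+\setminus\cs|$. Concretely, for $n=3$, $h=(2,3,3)$, $w=e$, the cell is the set of lower unitriangular $b$ satisfying $(s_3-s_1)b_{31}=(s_2-s_1)b_{21}b_{32}$, where $s_1,s_2,s_3$ are the eigenvalues of $\mathsf{S}$: all three pairs $(2,1),(3,2),(3,1)$ are reachable, yet the cell is a surface in $\C^3$, not $\C^3$ itself.

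This undermines the computation of $\mathrm{Supp}_k$ in exactly the direction the converse needs. Your ``distinct matchings give distinct monomials, hence no cancellation'' argument shows only that the minor is nonzero as a polynomial in formal variables placed at the reachable positions; it does not show the minor is nonzero as a function on the cell, because those entries satisfy relations. In the example above, $p^{(2)}_{23}=b_{21}b_{32}-b_{31}$, and on the cell this equals $\frac{s_3-s_2}{s_3-s_1}\,b_{21}b_{32}$: the two ``distinct monomials'' do partially cancel, and non-vanishing survives only because the eigenvalues are distinct --- a fact your formal argument never touches. Hence the inclusion $\{w\cdot\underline{i}\mid\underline{i}\in J_{w,h,k}\}\subseteq\mathrm{Supp}_k$, on which your reverse inclusion rests, is not established; proving it requires an honest parametrization of the Bia\l{}ynicki--Birula cell (i.e., the actual defining equations, as in \cite{Cho-Hong-Lee1}), not just the zero pattern of $b$. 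By contrast, the forward inclusion of the theorem uses only that non-reachable entries vanish, and that part of your write-up is fine.
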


Here and below we define and use the following partial order $\leq$ on $I_{k,n}$: 
\begin{equation}\label{eq: partial order on Ikn}
(i_1, i_2, \cdots, i_k) \geq (j_1, j_2, \cdots, j_k) \, \textup{ if and only if } \,  i_\ell \geq j_\ell \, \textup{ for all} \, \ell,\,  1 \leq \ell \leq k. 
\end{equation}
In order to obtain a description of $(\Omega_{w,h})^T$ in terms of Bruhat order using Theorem~\ref{thm.CHL}, we need the following lemma, which characterizes the set $J_{w,h,k}$ defined in ~\eqref{eq: J w h j} in terms of the maximal element $w_\cs$ of $\cw(\cs,h)$. This connects the discussion to that of Section~\ref{sec.reachability} and allows us to use the results therein.

\begin{Lemma}\label{lemma.reachability} Suppose $\cs\in \cw_h$.  For all $w\in \cw(\cs,h)$ and all  $k$ such that $1\leq k \leq n-1$, we have 
\[
J_{w,h,k} = \{ (i_1, \ldots, i_k) \in I_{k,n} \mid w_\cs \cdot (i_1,\ldots, i_k) \geq w_\cs \cdot (1,\ldots,k) \}
\]
where $J_{w,h,k}$ is the set defined by~\eqref{eq: J w h j}. 
\end{Lemma}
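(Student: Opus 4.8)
The plan is to prove the set equality by showing that, for fixed $\cs\in\cw_h$ and $w\in\cw(\cs,h)$, membership of $\underline{i}=(i_1,\dots,i_k)\in I_{k,n}$ in $J_{w,h,k}$ is equivalent to the componentwise inequality $w_\cs\cdot\underline{i}\ge w_\cs\cdot(1,\dots,k)$. Observe first that $J_{w,h,k}$ depends only on $\cs=N(w)\cap\Phi_h^+$ (reachability is taken with respect to $o_h(\cs)$), so I may carry out all arguments with the maximal element $w_\cs\in\cw(\cs,h)$. Throughout I will use the following standard fact about the dominance order: for finite sets of reals $\{a_1<\cdots<a_k\}$ and $\{b_1<\cdots<b_k\}$, one has $a_\ell\ge b_\ell$ for all $\ell$ if and only if there is a bijection $\phi$ from the second set to the first with $\phi(b)\ge b$ for every $b$ (the forward direction matches the $\ell$-th smallest elements, while the reverse is a short counting argument).

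For the forward inclusion, suppose $\{i_1,\dots,i_k\}$ is reachable from $\{1,\dots,k\}$, so there is $\sigma\in\Symm_k$ with $i_{\sigma(\ell)}$ reachable from $\ell$ for each $\ell$. Lemma~\ref{lemma: reachable and w}, applied to $w_\cs$, gives $w_\cs(\ell)\le w_\cs(i_{\sigma(\ell)})$ for all $\ell$; since $\sigma$ permutes the indices, the assignment $w_\cs(\ell)\mapsto w_\cs(i_{\sigma(\ell)})$ is a bijection between the value sets $\{w_\cs(1),\dots,w_\cs(k)\}$ and $\{w_\cs(i_1),\dots,w_\cs(i_k)\}$ with each image no smaller than its preimage. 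By the dominance fact this yields $w_\cs\cdot\underline{i}\ge w_\cs\cdot(1,\dots,k)$.

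The substantive direction is the converse, and the main obstacle is that reachability of $i$ from $j$ requires not only the value inequality $w_\cs(j)\le w_\cs(i)$ furnished by Proposition~\ref{prop.reachability} but also the order constraint $j\le i$; a bijection extracted from value dominance alone need not respect this. The key observation that resolves this is that $\{1,\dots,k\}$ is an \emph{initial segment}. Writing $A=\{i_1,\dots,i_k\}$ and $B=\{1,\dots,k\}$, I first match each common element of $A\cap B$ to itself, which is legitimate because every vertex is reachable from itself. Every remaining element of $A$ lies in $A\setminus[k]$ and is therefore strictly greater than $k$, while every remaining element of $B$ lies in $[k]$; hence for any leftover pair the order constraint $j\le i$ holds automatically, and only the value inequality remains to be arranged.

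To finish, I will check that deleting the common elements preserves value dominance: removing one shared value from both sorted sequences keeps the componentwise inequality (a short rank-bookkeeping argument using the relative ranks of the deleted value in the two sequences), and iterating handles all of $A\cap B$. Thus the reduced value sets $w_\cs(A\setminus[k])$ and $w_\cs([k]\setminus A)$ still satisfy sorted dominance, so matching them by rank produces a bijection $\phi$ from $[k]\setminus A$ to $A\setminus[k]$ with $w_\cs(\phi(\ell))\ge w_\cs(\ell)$. Since $\phi(\ell)>k\ge\ell$, Proposition~\ref{prop.reachability} shows that each $\phi(\ell)$ is reachable from $\ell$. Combining $\phi$ with the self-matches gives a permutation witnessing that $A$ is reachable from $B$, i.e., $\underline{i}\in J_{w,h,k}$, completing the proof. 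I expect the reduction via the initial-segment splitting, together with the preservation of dominance under deletion of common elements, to be the crux; the role of the single permutation $w_\cs$ is essential here, since it is precisely what allows the shared indices to be self-matched.
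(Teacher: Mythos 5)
Your proof is correct and follows the same route as the paper: both reduce the lemma to Proposition~\ref{prop.reachability} and then translate between the existence of a permutation $\sigma$ witnessing reachability and the componentwise inequality on the sorted value sequences. The paper's own proof dispatches that translation in a single sentence (``the claim now follows from our definition of the notation''), so your careful treatment of the matching --- in particular the initial-segment splitting that guarantees the order constraint $\ell \le i_{\sigma(\ell)}$ required to invoke Proposition~\ref{prop.reachability}, and the verification that deleting common values preserves sorted dominance --- supplies exactly the details the paper leaves implicit.
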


\begin{proof}
By its definition in~\eqref{eq: J w h j}, and from the definition of reachability, $J_{w,h,k}$ consists of $(i_1, \cdots, i_k) \in I_{k,h}$ such that there exists a permutation $\sigma \in \Symm_k$ with the property that $i_{\sigma(\ell)}$ is reachable from $\ell$ for all $1 \leq \ell \leq k$ with respect to $o_h(\cs)$. On the other hand, by Proposition~\ref{prop.reachability} this is equivalent to $w_\cs(i_{\sigma(\ell)}) \geq w_\cs(\ell)$ for all $1 \leq \ell \leq k$.  The claim now follows from our definition of the notation $w_\cs \cdot(i_1, \cdots, i_k) \geq w_\cs \cdot (1,\ldots, k)$.  
\end{proof}

We also need the following description of the Bruhat order in $\Symm_n$, which we now briefly recall (cf.~\cite[Propositions 2.4.8, 2.5.1, and Theorem 2.6.3]{BjoBre05}). 

\begin{Lemma} \label{lemma: Bruhat}
  Let $w,v\in \Symm_n$.
  Then $w \leq v$ in Bruhat order if and only
  if 
\begin{equation}\label{eq:tableau}
w\cdot (1, \ldots, k)\leq v\cdot (1, \ldots, k) \textup{ for all } 1\leq k \leq n-1.
\end{equation}
\end{Lemma}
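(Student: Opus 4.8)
The plan is to recognize this as the \emph{tableau criterion} for Bruhat order on $\Symm_n$, organizing the proof around the observation that $w\cdot(1,\ldots,k)$ records precisely the sorted set $\{w(1),\ldots,w(k)\}$, i.e.\ the image of $w$ in the maximal parabolic quotient $\Symm_n/(\Symm_k\times\Symm_{n-k})$. Under the usual identification of this quotient with $I_{k,n}$, the componentwise order of~\eqref{eq: partial order on Ikn} is exactly the Bruhat order on the quotient; setting up this dictionary is the bookkeeping that reduces the statement to standard facts, one for each implication.

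For the forward implication I would show that $w\le v$ forces all the sorted-segment inequalities. Since the componentwise order on $I_{k,n}$ is transitive and any $w\le v$ is joined to $v$ by a saturated chain of covers, it suffices to treat a single covering relation $v=wt_{ij}$ with $i<j$ and $w(i)<w(j)$. The sets $\{w(1),\ldots,w(k)\}$ and $\{v(1),\ldots,v(k)\}$ then coincide unless $i\le k<j$, in which case the second is obtained from the first by deleting $w(i)$ and inserting the strictly larger value $w(j)$, which is absent since $j>k$. Replacing one element of a $k$-set by a larger element not already present weakly increases every entry of the increasing rearrangement, giving $w\cdot(1,\ldots,k)\le v\cdot(1,\ldots,k)$ for all $k$; this is the same phenomenon as the order-preservation of the projection to each parabolic quotient recorded in~\cite[Prop.~2.5.1]{BjoBre05}.

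The reverse implication---that componentwise domination of all the sorted initial segments forces $w\le v$---is the substantive half, and I expect it to be the main obstacle, since it cannot be reduced to a single cover the way the forward direction can. This is precisely the content of the tableau criterion \cite[Theorem~2.6.3]{BjoBre05}, with the identification of the quotient order supplied by \cite[Prop.~2.4.8]{BjoBre05}. A self-contained argument would proceed by induction on $n$, locating the position of the largest value and using the inequalities to produce a length-increasing transposition carrying $w$ toward $v$ while preserving all the hypotheses; but since the result is standard, I would invoke \cite[Theorem~2.6.3]{BjoBre05} directly, having checked above that the componentwise order on $I_{k,n}$ agrees with the order used there.
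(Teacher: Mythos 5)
Your proposal is correct and matches the paper's treatment: the paper gives no independent proof of this lemma, simply recalling it as the standard tableau criterion via the same references \cite[Propositions 2.4.8, 2.5.1, and Theorem 2.6.3]{BjoBre05} that you invoke. Your additional sketch of the forward direction via covering relations is a sound (if unneeded) elaboration of the same standard argument.
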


We are now ready to state and prove the main result of this note. 

\begin{Theorem}\label{theorem: main}
 Let $h: [n]\to [n]$ be a Hessenberg function and fix $\cs\in \cw_h$.  Then 
\begin{equation}\label{eq: main} 
\Omega_{w_\cs, h}^T = [w_\cs,w_0].
\end{equation} 
where $[w_\cs, w_0]$ denotes the Bruhat interval of $w\in \Symm_n$ such that $w_\cs \leq w$.
\end{Theorem}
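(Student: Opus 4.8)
The plan is to assemble the main theorem directly from the Cho--Hong--Lee characterization (Theorem~\ref{thm.CHL}) together with the two translation lemmas already in hand (Lemma~\ref{lemma.reachability} and Lemma~\ref{lemma: Bruhat}), the only genuine content being a bijectivity observation. First I would apply Theorem~\ref{thm.CHL} with $w = w_\cs$, which is legitimate since $w_\cs \in \cw(\cs,h)$ by Lemma~\ref{lemma.interval}. This presents $\Omega_{w_\cs,h}^T$ as the set of $u \in \Symm_n$ for which $u\cdot(1,\ldots,k)$ lies in the image set $\{\, w_\cs \cdot (i_1,\ldots,i_k) \mid (i_1,\ldots,i_k) \in J_{w_\cs,h,k} \,\}$ for every $1\leq k \leq n-1$. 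Because $w_\cs \in \cw(\cs,h)$, Lemma~\ref{lemma.reachability} applies and rewrites $J_{w_\cs,h,k}$ as exactly those $\underline{i} = (i_1,\ldots,i_k) \in I_{k,n}$ satisfying $w_\cs \cdot \underline{i} \geq w_\cs \cdot (1,\ldots,k)$ in the partial order of~\eqref{eq: partial order on Ikn}.

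The key step is to observe that the map $\underline{i} \mapsto w_\cs \cdot \underline{i}$ is a bijection of $I_{k,n}$ onto itself, since $w_\cs$ permutes the $k$-element subsets of $[n]$ and sorting gives the associated element of $I_{k,n}$. Under this bijection the defining condition of $J_{w_\cs,h,k}$ states precisely that the image $w_\cs \cdot \underline{i}$ lies in the principal up-set $\{\, \underline{j} \in I_{k,n} \mid \underline{j} \geq w_\cs \cdot (1,\ldots,k) \,\}$. Consequently the image set appearing in Theorem~\ref{thm.CHL} is exactly this up-set, so the membership condition $u\cdot(1,\ldots,k) \in \{\, w_\cs \cdot \underline{i} \mid \underline{i} \in J_{w_\cs,h,k} \,\}$ collapses to the single inequality $u\cdot(1,\ldots,k) \geq w_\cs \cdot (1,\ldots,k)$.

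Putting these together, $\Omega_{w_\cs,h}^T$ equals the set of $u \in \Symm_n$ with $u\cdot(1,\ldots,k) \geq w_\cs \cdot (1,\ldots,k)$ for all $1\leq k \leq n-1$. Finally I would invoke the tableau criterion of Lemma~\ref{lemma: Bruhat}, which identifies this entire family of inequalities with the Bruhat relation $w_\cs \leq u$; hence $\Omega_{w_\cs,h}^T = \{\, u \in \Symm_n \mid w_\cs \leq u \,\} = [w_\cs,w_0]$, as claimed. I do not expect a serious obstacle here: all of the geometric and combinatorial difficulty has already been absorbed into Proposition~\ref{prop.reachability} and Lemma~\ref{lemma.reachability}, so the proof is a clean concatenation, and the only mildly subtle point is the bijectivity step that converts the reachability description of $J_{w_\cs,h,k}$ into a monotone up-set matching the tableau criterion.
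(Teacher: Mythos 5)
Your proposal is correct and follows essentially the same route as the paper: both rest on Theorem~\ref{thm.CHL}, Lemma~\ref{lemma.reachability}, and the tableau criterion of Lemma~\ref{lemma: Bruhat}. The only difference is organizational: your observation that $\underline{i} \mapsto w_\cs\cdot\underline{i}$ is a bijection of $I_{k,n}$ turns the whole argument into a chain of equivalences yielding both inclusions at once, whereas the paper proves $\Omega_{w_\cs,h}^T \subseteq [w_\cs,w_0]$ separately via the geometric containment $\Omega_{w_\cs,h} \subseteq \Omega_{w_\cs}\cap \Hess(\mathsf{S},h)$ and obtains the reverse inclusion by exhibiting, for each $u \geq w_\cs$, explicit indices $i_\ell$ with $w_\cs(i_\ell)=u(\ell)$.
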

\begin{proof} First, as $\Omega_{w_\cs,h}^\circ \subseteq \Omega_w\cap \Hess(\mathsf{S},h)$ and the intersection $\Omega_w\cap \Hess(\mathsf{S},h)$ is closed, we have $\Omega_{w,h} \subseteq \Omega_w\cap \Hess(\mathsf{S},h)$.  It follows immediately that $\Omega_{w_\cs, h}^T \subseteq [w_\cs,w_0]$.

To prove the opposite inclusion, suppose $u \in [w_\cs, w_0]$.  By Lemma~\ref{lemma: Bruhat} we obtain
\begin{eqnarray}\label{eqn1}
w_\cs\cdot (1, \ldots, k) \leq u \cdot (1, \ldots, k) \; \textup{ for all } \; 1\leq k \leq n-1.
\end{eqnarray}
As $w_\cs$ is a permutation there exists $i_1, \ldots, i_{n-1} \in [n]$ such that $w_\cs (i_\ell) = u(\ell)$ for all $1\leq \ell \leq n-1$.  Equation~\eqref{eqn1} now yields
\[
w_\cs\cdot (i_1, \ldots, i_k) = u \cdot (1, \ldots, k) \geq w_\cs\cdot (1, \ldots, k)  \; \textup{ for all } \; 1\leq k \leq n-1.
\]
Now $ (i_1, \ldots, i_k) \in J_{w_\cs,h,k}$ by Lemma~\ref{lemma.reachability} and therefore $u\in \Omega_{w_\cs,h}^T$ by Theorem~\ref{thm.CHL}.
\end{proof}

Let $w_0=[n,n-1, \ldots, 2, 1]$ denote the longest element in $\Symm_n$.
By translating by $w_0$, we easily obtain the analogous result for Hessenberg Schubert cells corresponding to the minimal element $z_\cs$ of $\cw(\cs,h)$.

\begin{Corollary} Let $h: [n] \to [n]$ be a Hessenberg function and $\cs\in \cw_h$.  Then 
\[
X_{z_\cs ,h }^T = [e, z_\cs].
\]
where $[e, z_\cs]$ denotes the Bruhat interval of $w\in \Symm_n$ such that $w \leq z_\cs$.
\end{Corollary}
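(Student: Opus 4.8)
The plan is to deduce this from Theorem~\ref{theorem: main} by translating everything by the longest element $w_0$. Geometrically, conjugation by the permutation matrix $w_0$ interchanges $B$ and $B_-$, so left multiplication by $w_0$ sends $X_w^\circ = BwB/B$ to $\Omega_{w_0 w}^\circ = B_- w_0 w B/B$; it also sends $\Hess(\mathsf{S},h)$ to $\Hess(\mathsf{S}',h)$ with $\mathsf{S}' := w_0 \mathsf{S} w_0^{-1}$ (this is the identity $\Hess(\mathsf{S},h)=u\Hess(u^{-1}\mathsf{S}u,h)$ used in Proposition~\ref{prop.cell-translation}, with $u=w_0$), and $\mathsf{S}'$ is again a diagonal matrix with distinct eigenvalues. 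Intersecting and taking closures gives $w_0 X_{z_\cs, h} = \Omega_{w_0 z_\cs, h}$ inside the regular semisimple Hessenberg variety $\Hess(\mathsf{S}',h)$. Since $w_0$ normalizes $T$, left multiplication by $w_0$ preserves $T$-fixed points and acts on $(GL_n(\C)/B)^T = \Symm_n$ by $u\mapsto w_0 u$; hence $X_{z_\cs,h}^T = w_0 \cdot \Omega_{w_0 z_\cs, h}^T$.

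Next I would identify $w_0 z_\cs$ combinatorially. From $(w_0 w)(i) = n+1-w(i)$ one checks directly that $N(w_0 w) = \Phi^+\setminus N(w)$, so $N(w_0 w)\cap\Phi_h^+ = \Phi_h^+\setminus\cs =: \cs^{c}$, which is again a subset of Weyl type by Definition~\ref{definition: subset Weyl type}. Thus $w\mapsto w_0 w$ restricts to a bijection $\cw(\cs,h)\to\cw(\cs^{c},h)$; because it complements inversion sets, it reverses inclusion of inversion sets and hence reverses both weak and ordinary Bruhat order. Applying it to the weak interval $\cw(\cs,h)=[z_\cs,w_\cs]_L$ furnished by Lemma~\ref{lemma.interval}, I conclude that the maximal element of $\cw(\cs^{c},h)$ is $w_{\cs^{c}} = w_0 z_\cs$.

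Finally I would assemble the pieces. Theorem~\ref{theorem: main}, applied to $\cs^{c}$ (with the matrix $\mathsf{S}'$), gives $\Omega_{w_0 z_\cs,h}^T = \Omega_{w_{\cs^{c}},h}^T = [w_0 z_\cs, w_0]$. Translating back, and using that $u\mapsto w_0 u$ reverses Bruhat order and therefore carries $[w_0 z_\cs, w_0]$ to $[e, z_\cs]$, I obtain $X_{z_\cs,h}^T = w_0\cdot[w_0 z_\cs,w_0] = [e,z_\cs]$, as claimed. The step most prone to error is the bookkeeping of minima versus maxima under this order-reversing involution: one must verify that it is $w_0 z_\cs$, and not $w_0 w_\cs$, that is the \emph{maximal} element of $\cw(\cs^{c},h)$, since only then does Theorem~\ref{theorem: main} apply. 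The one remaining point is that Theorem~\ref{theorem: main} holds verbatim for $\mathsf{S}'$ in place of $\mathsf{S}$, which is automatic because its right-hand side $[w_{\cs^{c}},w_0]$ depends only on the combinatorial data $h$ and $\cs^{c}$ and not on the particular regular semisimple diagonal matrix chosen.
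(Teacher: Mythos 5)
Your proposal is correct and follows essentially the same route as the paper: translate by $w_0$ (which swaps $B$ and $B_-$ and sends $\Hess(\mathsf{S},h)$ to $\Hess(w_0\mathsf{S}w_0,h)$), identify $w_0 z_\cs$ as the maximal element of $\cw(\Phi_h^+\setminus\cs,h)$ exactly as in Corollary~\ref{cor.max}, and then apply Theorem~\ref{theorem: main} together with the order-reversing property of left multiplication by $w_0$. Your extra care about minima versus maxima and about replacing $\mathsf{S}$ by $\mathsf{S}'$ is warranted and handled correctly.
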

\begin{proof} Let $\bar{\cs} = \Phi_h^+ \setminus \cs\in \cw_h$.  Since $w_0B_-w_0^{-1} = B$ we have $w_0\Omega_{w}^\circ = X_{w_0w}^\circ$.  Using the fact that $w_0w_{\bar{\cs}} = z_{\cs}$ as in the proof of Corollary~\ref{cor.max} we obtain
\begin{eqnarray*}
w_0 \left(\Omega_{w_{\bar\cs } }^\circ\cap \Hess(w_0\mathsf{S}w_0, h) \right) = X_{z_\cs}^\circ \cap \Hess(\mathsf{S},h) = X_{z_\cs,h}^\circ.
\end{eqnarray*}
Since left multiplication respects closure relations in the flag variety, the corollary now follows by application of Theorem~\ref{theorem: main} to the Hessenberg Schubert variety $\overline{\Omega_{w_{\bar\cs } }^\circ\cap \Hess(w_0\mathsf{S}w_0, h)}$ and the fact that $w_{\bar\cs} \leq w\Leftrightarrow z_{\cs} \geq w_0w$ since left multiplication by $w_0$ is an order reversing involution of~$\Symm_n$.
\end{proof}

We can also give a reformulation of Theorem~\ref{thm.CHL} in the language of Bruhat order.

\begin{Corollary} \label{cor.reformulation}  Let $h: [n] \to [n]$ be a Hessenberg function and $\cs\in \cw_h$.  Suppose $v\in \cw(\cs,h)$ and let $w_\cs = u^{-1}v$ for $u\in \Symm_n$ such that $\ell(w_\cs) = \ell(v)+\ell(u)$.  Then 
\[
\Omega_{v,h}^T = u[w_\cs, w_0].
\]
\end{Corollary}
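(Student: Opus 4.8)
The plan is to bootstrap from Proposition~\ref{prop.cell-translation} and Theorem~\ref{theorem: main} by transporting the $T$-fixed point description through left multiplication by $u$. First I would invoke Proposition~\ref{prop.cell-translation}, whose hypotheses ($w_\cs = u^{-1}v$ with $\ell(w_\cs) = \ell(v)+\ell(u)$) are exactly those assumed here, to write
\[
\Omega_{v,h}^\circ = u\left(\Omega_{w_\cs}^\circ \cap \Hess(u^{-1}\mathsf{S}u, h)\right).
\]
Since left multiplication $L_u \colon G/B \to G/B$, $gB \mapsto ugB$, is a homeomorphism, it commutes with closure, so taking closures gives $\Omega_{v,h} = u\,\overline{\Omega_{w_\cs}^\circ \cap \Hess(u^{-1}\mathsf{S}u, h)}$. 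The key observation is that $\mathsf{S}' := u^{-1}\mathsf{S}u$ is again a diagonal matrix with distinct eigenvalues, since conjugation by the permutation matrix $u$ merely permutes the diagonal entries of $\mathsf{S}$. Hence the factor inside the closure is precisely the opposite Hessenberg Schubert cell for $w_\cs$ attached to the regular semisimple element $\mathsf{S}'$, and its closure is the corresponding opposite Hessenberg Schubert variety, which I will denote $\Omega_{w_\cs, h}'$. Thus $\Omega_{v,h} = u\,\Omega_{w_\cs, h}'$.

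Next I would apply Theorem~\ref{theorem: main} to $\Omega_{w_\cs, h}'$. The point to emphasize is that the statement of Theorem~\ref{theorem: main}, and indeed the entire combinatorial machinery of Cho--Hong--Lee underlying Theorem~\ref{thm.CHL} (reachability, the sets $J_{w,h,k}$, etc.), makes no reference to the choice of regular semisimple diagonal element: the $T$-fixed points of an opposite Hessenberg Schubert variety are determined purely by $h$ and $\cs = N(w_\cs)\cap\Phi_h^+$. Consequently Theorem~\ref{theorem: main} holds verbatim for $\mathsf{S}'$ in place of $\mathsf{S}$, yielding $(\Omega_{w_\cs, h}')^T = [w_\cs, w_0]$.

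Finally I would transport $T$-fixed points across $L_u$. Because $u$ is a permutation matrix it normalizes $T$, so $L_u \circ t = (utu^{-1})\circ L_u$ for every $t \in T$; thus $L_u$ intertwines the $T$-action with itself up to the automorphism $t \mapsto utu^{-1}$ of $T$. It follows that $uY$ is $T$-stable whenever $Y$ is, and that a point $p$ is fixed by all of $T$ if and only if $L_u(p)$ is, so that $(uY)^T = u\,(Y^T)$ for any $T$-stable subvariety $Y$. Applying this with $Y = \Omega_{w_\cs, h}'$ gives
\[
\Omega_{v,h}^T = (u\,\Omega_{w_\cs, h}')^T = u\,(\Omega_{w_\cs, h}')^T = u[w_\cs, w_0],
\]
where, under the usual identification of $T$-fixed points with permutations, $u[w_\cs,w_0]$ denotes $\{u\sigma \mid \sigma\in[w_\cs,w_0]\}$. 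The genuinely delicate point, and the step I expect to be the main obstacle, is the bookkeeping in this last paragraph: left multiplication by $u$ is \emph{not} $T$-equivariant in the naive sense, and the argument turns entirely on the fact that $u$ normalizes $T$ so that conjugation by $u$ fixes the fixed-locus setwise. The second point requiring explicit care is the independence of the fixed-point set from the choice of regular semisimple diagonal element, since Theorem~\ref{theorem: main} is literally stated only for the fixed $\mathsf{S}$; this must be remarked upon so that its application to $\mathsf{S}'$ is legitimate.
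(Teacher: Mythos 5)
Your proposal is correct and follows essentially the same route as the paper: apply Proposition~\ref{prop.cell-translation}, take closures, apply Theorem~\ref{theorem: main} to the translated variety, and transport fixed points through left multiplication by $u$. The paper's own proof is terser, leaving implicit the two points you rightly flag as needing care (that Theorem~\ref{theorem: main} applies verbatim to $u^{-1}\mathsf{S}u$ since the fixed-point description depends only on $h$ and $\cs$, and that $(uY)^T = u(Y^T)$ because $u$ normalizes $T$), so your write-up is if anything more complete.
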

\begin{proof} 
Left multiplication respects closure relations in the flag variety, so by Proposition~\ref{prop.cell-translation} we have $\Omega_{v,h} = u\left( \overline{\Omega_{w_\cs}^\circ \cap \Hess(u^{-1}\mathsf{S}u, h)} \right)$.  Theorem~\ref{theorem: main} now yields 
\[
\left( \overline{\Omega_{w_\cs}^\circ \cap \Hess(u^{-1}\mathsf{S}u, h)} \right)^T = [w_\cs,w_0]
\] 
and the corollary follows.
\end{proof}

We conclude this section with a discussion of the geometry of the Hessenberg Schubert variety~$\Omega_{w_\cs,h}$.
The results of Theorem~\ref{theorem: main} tell us that
\begin{equation}\label{eqn.maintheorem}
\Omega_{w_\cs, h}^T=\Omega_{w_\cs}^T = (\Hess(\mathsf{S}, h) \cap \Omega_{w_\cs})^T
\end{equation}
where the second equality follows from~\cite[Proposition 3]{DeMProSha92}. Since 
\begin{eqnarray}\label{eqn.dimequality}
\dim \Omega_{w_\cs, h}  = \dim  \Omega_{w_\cs, h}^\circ= \dim (\Omega_{w_\cs}\cap \Hess(\mathsf{S},h)) 
\end{eqnarray}
by definition, the equality~\eqref{eqn.maintheorem} might lead one to hope that $\Omega_{w_\cs, h} =\Omega_{w_\cs}\cap \Hess(\mathsf{S},h)$.  This is not true in general.  More specifically, although $\Omega_{w_\cs, h} =\Omega_{w_\cs}\cap \Hess(\mathsf{S},h)$ holds in many cases (see Remark~\ref{rem.equality}), we can conclude in general only that $\Omega_{w_\cs, h}$ is an irreducible component of $\Omega_{w_\cs}\cap \Hess(\mathsf{S},h) = \bigcup_{w\geq w_\cs} \Omega_{w,h}^\circ$. 
We give more details below.

Let $w\in \Symm_n$.  As $\Omega_{w,h}$ is the closure of the affine cell $\Omega_{w, h}^\circ$, it is irreducible.
Now~\eqref{eqn.dimequality} and fact that $\Omega_{w,h} \subseteq \Omega_{w}\cap \Hess(\mathsf{S},h)$ together imply that $\Omega_{w,h}$ is indeed an irreducible component of $\Omega_{w}\cap \Hess(\mathsf{S},h)$.   It now follows that the equality 
\begin{eqnarray}\label{eqn.equal}
\Omega_{w, h} =\Omega_{w}\cap \Hess(\mathsf{S},h)
\end{eqnarray}
holds if and only if $\Omega_{w}\cap \Hess(\mathsf{S},h)$ is irreducible. We consider cases. If $w$ is not the maximal element of $\cw(\cs,h)$ for some $\cs\in \cw_h$, then $\Omega_{w}\cap \Hess(\mathsf{S},h)$ is reducible, since $\Omega_{w, h}^T \subsetneq [w,w_0] = (\Omega_{w}\cap \Hess(\mathsf{S},h))^T$ by Corollary~\ref{cor.reformulation}.  On the other hand, if $w=w_\cs$, the following example  shows that $\Omega_{w_\cs}\cap \Hess(\mathsf{S},h)$ is, in general, still reducible.

\begin{Example}\label{key-ex} 
Let $n=4$ and $h=(3,4,4,4)$. In this case, $\Phi_h^+ = \Phi^+ \setminus \{ t_1-t_4 \}$ and $\dim \Hess(\mathsf{S}, h) = 5$.  Consider $\cs=\{ t_2-t_3, t_1-t_3\} \in \cw_h$.  In Example~\ref{ex.interval} above, it was shown that $w_\cs = [2,3,1,4]$.  We also have
\[
\dim (\Omega_{w_\cs}\cap \Hess(\mathsf{S},h)) = \dim(\Omega_{w_\cs,h}^\circ) = 5-|\cs| = 3 .
\] 
Consider $w=[2,3,4,1]$, which satisfies $w_\cs <w$ so $\Omega_{w,h} \subseteq \Omega_{w_\cs}\cap \Hess(\mathsf{S},h)$. As 
\[
N(w) = \{ t_3-t_4, t_2-t_4, t_1-t_4 \} \Rightarrow \ell_h(w)= |N(w)\cap \Phi_h^+| =2
\]
we have $\dim \Omega_{w,h}^\circ = 3$.  Now the fact that $\Omega_{w_\cs,h}^\circ \cap \Omega_{w,h}^\circ = \emptyset$ and $\dim \Omega_{w,h} = \dim (\Omega_{w_\cs}\cap \Hess(\mathsf{S},h))$ implies $ \Omega_{w,h}$ is an irreducible component of $\Omega_{w_\cs}\cap \Hess(\mathsf{S},h)$ with $\Omega_{w,h}\neq \Omega_{w_\cs, h}$.  This shows $\Omega_{w_\cs}\cap \Hess(\mathsf{S},h)$ is reducible.
\end{Example}

\begin{Remark}\label{rem.equality} When $h = (2,3,4,\ldots, n, n)$, the corresponding regular semisimple Hessenberg variety is the toric variety associated to the Weyl chambers, also called the permutohedral variety.  In this well-studied case, the equality from~\eqref{eqn.equal} holds for all $w_\cs$ with $\cs \in \cw_h$.  This follows from an application of~\cite[Theorem 3.10]{Insko-Precup}. 
%Thus, in this case, $\Omega_{w_\cs}\cap \Hess(\mathsf{S},h)$ is irreducible and~\eqref{eqn.equal} holds. 
\end{Remark}

%%%%%%%%%%%%%
\appendix
%==================
\section{Proof of Lemma~\ref{lemma.interval} by Michael Zeng}\label{sec.appendix}
%==================
%%%%%%%%%%%%%

We now present a proof of Lemma~\ref{lemma.interval}, written by Michael Zeng as part of an undergraduate research project with the second author.  Let $w_0 = [n,n-1,\ldots, 2,1]$ denote the longest element of $\Symm_n$. Consider the map
\[
\varphi: \Symm_n \to \Symm_n, \; \varphi(w) = w_0w.
\]
Since
\begin{eqnarray} \label{eqn.complement}
N(w_0w) = \Phi^+ \setminus N(w)
\end{eqnarray}
for all $w\in \Symm_n$ it follows that $\varphi$ defines an order-reversing involution with respect to the weak order, that is, $u\leq_Lv \Leftrightarrow \varphi(u)\geq_L \varphi(v)$ \cite[Prop.~3.1.5]{BjoBre05}.

\begin{Lemma}\label{lemma.bijection} Let $\cs\in \cw_h$. The restriction of $\varphi$ to $\cw(\cs,h)$ is a 
bijection between $\cw(\mathcal{S}, h)$ and $\cw(\Phi_h^+ \setminus \cs, h)$. 
\end{Lemma}

\begin{proof} Since $\varphi$ is an involution, it suffices to show that $\varphi(w)\in \cw(\Phi_h^+\setminus \cs, h)$ for all $w\in \cw(\cs, h)$.
Intersecting both sides of equation~\eqref{eqn.complement} with $\Phi_h^+$ we obtain $N(w_0w) \cap \Phi_h^+ = \Phi_h^+ \setminus (N(w) \cap \Phi_h^+)$, which is equivalent to $N(w_0w) \cap \Phi_h^+ = \Phi_h^+ \setminus \mathcal{S}$. Thus $\varphi(w) = w_0w\in W(\Phi_h^+ \setminus \mathcal{S}, h)$, as desired.  
\end{proof}

Using the previous lemma and Theorem~\ref{thm.Sommers-Tymoczko}, we obtain the following corollary.

\begin{Corollary}\label{cor.max} For each $\cs\in \cw_h$, 
$\cw(\mathcal{S}, h)$ has a unique maximal element with respect to weak Bruhat order. 
\end{Corollary}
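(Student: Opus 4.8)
The goal is to prove Corollary~\ref{cor.max}: for each $\cs \in \cw_h$, the set $\cw(\cs,h)$ has a unique maximal element with respect to weak Bruhat order. The plan is to exploit the order-reversing bijection $\varphi$ established in Lemma~\ref{lemma.bijection} to transport a \emph{minimal} element statement into the desired \emph{maximal} element statement, and to produce the minimal element using part (2) of Theorem~\ref{thm.Sommers-Tymoczko}.

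First I would argue that $\cw(\cs,h)$ has a unique \emph{minimal} element with respect to weak Bruhat order. The natural candidate is the element $z_\cs$ from Theorem~\ref{thm.Sommers-Tymoczko}(2), which satisfies $\cs = N(z_\cs) \cap \Phi_h^+$ (so $z_\cs \in \cw(\cs,h)$) together with the extra condition $z_\cs^{-1}(-\Delta) \cap \Phi^+ \subseteq \Phi_h^+$. The key tool is the characterization $u \leq_L v \Leftrightarrow N(u) \subseteq N(v)$ recorded just before Lemma~\ref{lemma.interval}. So it suffices to show $N(z_\cs) \subseteq N(y)$ for every $y \in \cw(\cs,h)$. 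But any such $y$ satisfies $\cs = N(y)\cap \Phi_h^+ \subseteq N(y)$, and Theorem~\ref{thm.Sommers-Tymoczko}(3) states precisely that $N(z_\cs) \subseteq N(y)$ whenever $\cs \subseteq N(y)$. Hence $z_\cs \leq_L y$ for all $y \in \cw(\cs,h)$, which makes $z_\cs$ the unique minimal element.

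Next I would apply Lemma~\ref{lemma.bijection} to the complementary subset $\bar{\cs} := \Phi_h^+ \setminus \cs \in \cw_h$. The lemma gives a bijection $\varphi\colon \cw(\bar\cs, h) \to \cw(\cs, h)$ (using that $\varphi$ is an involution and that $\Phi_h^+ \setminus \bar\cs = \cs$), and by the discussion at the start of the appendix $\varphi$ is order-reversing for weak Bruhat order. By the previous paragraph, $\cw(\bar\cs,h)$ has a unique minimal element, namely $z_{\bar\cs}$. Since $\varphi$ is an order-reversing bijection, it carries the unique minimal element of $\cw(\bar\cs,h)$ to the unique maximal element of $\cw(\cs,h)$; explicitly, $w_\cs := \varphi(z_{\bar\cs}) = w_0 z_{\bar\cs}$ is the desired unique maximal element.

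The main obstacle is really just making sure the two halves fit together cleanly: one must verify that the minimal-element argument genuinely uses Theorem~\ref{thm.Sommers-Tymoczko}(3) in the direction stated, and that $\varphi$ restricted to $\cw(\bar\cs,h)$ lands in $\cw(\cs,h)$ rather than some other block of the partition, which is exactly the content of Lemma~\ref{lemma.bijection}. Beyond that, everything is a formal consequence of the equivalence $u \leq_L v \Leftrightarrow N(u) \subseteq N(v)$ and the order-reversing property of $\varphi$, so no delicate computation is required. (I note that this also sets up the identity $w_0 w_{\bar\cs} = z_\cs$ used later in the proof of the final Corollary, once one checks the roles of $\cs$ and $\bar\cs$ are consistently assigned.)
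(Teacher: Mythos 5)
Your proof is correct and follows essentially the same route as the paper: establish the unique minimal element of $\cw(\bar\cs,h)$ via Theorem~\ref{thm.Sommers-Tymoczko}(3), then transport it to a unique maximal element of $\cw(\cs,h)$ using the order-reversing involution $\varphi$ and Lemma~\ref{lemma.bijection}. The only difference is that you spell out the minimality argument (via $\cs \subseteq N(y)$ and the equivalence $u \leq_L v \Leftrightarrow N(u)\subseteq N(v)$) in more detail than the paper, which simply cites Theorem~\ref{thm.Sommers-Tymoczko}(3).
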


\begin{proof}
Since $\cs$ is a subset of Weyl type with respect to $h$, the complement $\bar{\cs} = \Phi_h^+ \setminus \mathcal{S}$ is also a subset of Weyl type with respect to $h$. 
By Theorem~\ref{thm.Sommers-Tymoczko}(3), the set $\cw(\Phi_h^+ \setminus \cs,h)$ has a unique minimal element with respect to the weak Bruhat order, denoted $z_{\bar{\cs}}$. By Lemma~\ref{lemma.bijection}, $\varphi_{\bar{\cs}}(z_{\bar{\cs}})\in \cw(\cs, h)$. 
Since $\varphi$ is an order reversing involution and $z_{\bar{\cs}}$ is the unique minimal element in $\cw(\Phi_h^+ \setminus \cs,h)$, we conclude that $\varphi(z_{\bar{\cs}})$ is the unique maximal element in $W(\cs,h)$.
\end{proof}

Finally, we prove Lemma~\ref{lemma.interval}: $\cw(\cs, h)$ is an interval in the weak Bruhat order. 

\begin{proof}[Proof of Lemma~\ref{lemma.interval}] 
Let $z_\cs$ and $w_\cs$ be the unique minimum and maximum elements, respectively, of $\cw(\cs,h)$ with respect to the weak (left) Bruhat order.  Such elements must exist by Theorem~\ref{thm.Sommers-Tymoczko}(3) and Corollary~\ref{cor.max}. The inclusion $\cw(\cs, h) \subseteq [z_\cs, w_\cs]_L$ is immediate. To show the converse, let $w\in [z_\cs, w_\cs]_L$. This means $z_\cs \leq_L w \leq_L w_\cs$, or equivalently, $N(z_\cs) \subseteq N(w) \subseteq N(w_\cs)$. Taking the intersection with $\Phi_h^+$ at each term in this chain, we obtain $N(z_\cs) \cap \Phi_h^+ \subseteq N(w)\cap \Phi_h^+ \subseteq  N(w_\cs)\cap \Phi_h^+$. Since $z_\cs, w_\cs\in \cw(\cs,h)$ we have $N(z_\cs) \cap \Phi_h^+ = N(w_\cs) \cap \Phi_h^+ = \cs$, implying $N(w)\cap \Phi_h^+ = \cs$. This proves $w \in \cw(\cs, h)$, as desired.
\end{proof}

\newcommand{\etalchar}[1]{$^{#1}$}
\def\cprime{$'$}

{\ifdebug {\newpage} \else {\end{document}} \fi}

\end{document}